\documentclass[11pt,a4paper]{amsart}
\usepackage[left=25mm,top=30mm,bottom=25mm,right=25mm]{geometry}
\usepackage{mathtools, amsmath, amssymb, amsfonts, amsthm}
\usepackage[all]{xy}
\usepackage{bm}
\usepackage{mathrsfs}
\usepackage{dsfont}
\usepackage{tensor}
\usepackage{stmaryrd}
\usepackage[vcentermath]{youngtab}
\usepackage[utf8]{inputenc}
\usepackage{lmodern}
\usepackage[english]{babel}
\usepackage{graphicx}
\usepackage{wrapfig}
\usepackage[pdfdisplaydoctitle=true,
      colorlinks=true,
      urlcolor=blue,
      citecolor=blue,
      linkcolor=blue,
      pdfstartview=FitH,
      pdfpagemode=UseNone,
      bookmarksnumbered=true,
      unicode=true]{hyperref}
\usepackage{hyperxmp}
\usepackage{cmap}
\usepackage{tikz}

\newtheorem{thm}{Theorem}[section]

\newtheorem{prop}[thm]{Proposition}

\theoremstyle{definition}
\newtheorem{rem}[thm]{Remark}

\newtheorem{defn}[thm]{Definition}

\newtheorem{exam}[thm]{Example}

\numberwithin{equation}{section}

\newcommand{\RR}{\mathbb{R}}                

\newcommand{\set}[1]{\left\{#1\right\}}

\newcommand{\espace}{\mathcal{E}}           

\newcommand{\Cinf}{\mathrm{C}^{\infty}}

\newcommand{\vol}{\mathrm{vol}}     
\newcommand{\Scal}{s}               

\newcommand{\bA}{\mathbf{A}}
\newcommand{\bB}{\mathbf{B}}

\newcommand{\bD}{\mathbf{D}}
\newcommand{\bE}{\mathbf{E}}

\newcommand{\bH}{\mathbf{H}}

\newcommand{\bJ}{\mathbf{J}}
\newcommand{\bK}{\mathbf{K}}
\newcommand{\bL}{\mathbf{L}}

\newcommand{\bP}{\mathbf{P}}

\newcommand{\bT}{\mathbf{T}}

\newcommand{\bW}{\mathbf{W}}
\newcommand{\bX}{\mathbf{X}}

\newcommand{\ba}{\mathbf{a}}
\newcommand{\bb}{\mathbf{b}}

\newcommand{\bq}{\mathbf{q}}

\newcommand{\ee}{\bm{e}}

\newcommand{\xx}{\bm{x}}

\newcommand{\bOmega}{\bm{\Omega}}

\newcommand{\balpha}{\bm{\alpha}}
\newcommand{\bbbeta}{\bm{\beta}}
\newcommand{\bbeta}{\bm{\eta}}
\newcommand{\bphi}{\bm{\phi}}
\newcommand{\bepsilon}{\bm{\varepsilon}}

\newcommand{\bsigma}{\bm{\sigma}}

\newcommand{\bxi}{\bm{\xi}}

\newcommand{\id}{\mathrm{id}}

\newcommand{\rd}{\mathrm{d}}

\newcommand{\dd}[2]{\frac{\mathrm{d}{#1}}{\mathrm{d}{#2}}}

\newcommand{\pdi}[1]{\partial_{#1}}
\newcommand{\pdij}[1]{\partial^{2}_{#1}}

\DeclareMathOperator{\Lie}{\mathcal{L}} %
\DeclareMathOperator{\tr}{tr} %
\DeclareMathOperator{\Inc}{\mathbf{Inc}} 
\DeclareMathOperator{\grad}{\mathbf{grad}} %
\DeclareMathOperator{\dive}{\mathbf{div}} %
\DeclareMathOperator{\rot}{\mathbf{curl}} %

\hypersetup{
    pdftitle={The linear Elasticity complex: a natural formulation},
    pdfauthor={Romain Lloria and Boris Kolev},
    pdfsubject={2020 Mathematics Subject Classification: 74B05, 58J10, 18G35, 53Z30, 74A15},
    pdfkeywords={Elasticity complex, Saint--Venant compatibility condition, Cesàro--Volterra path integral, Homotopy operator, Dubois--Violette--Henneaux Generalized complex, Airy potential, Beltrami Stress Functions},
    pdflang=en
}
\begin{document}

\title{The linear Elasticity complex: a natural formulation}%

\author{R. Lloria}
\address[Romain Lloria]{Université Paris-Saclay, CentraleSupélec, ENS Paris-Saclay,  CNRS, LMPS - Laboratoire de Mécanique Paris-Saclay, 91190, Gif-sur-Yvette, France}
\email{romain.lloria@ens-paris-saclay.fr}

\author{B. Kolev}
\address[Boris Kolev]{Université Paris-Saclay, CentraleSupélec, ENS Paris-Saclay,  CNRS, LMPS - La\-bo\-ra\-toi\-re de Mécanique Paris-Saclay, 91190, Gif-sur-Yvette, France}
\email{boris.kolev@ens-paris-saclay.fr}

\date{April 24, 2026}%
\subjclass[2020]{74B05, 58J10, 18G35, 53Z30, 74A15}
\keywords{Elasticity complex, Saint--Venant compatibility condition, Cesàro--Volterra path integral, Homotopy operator, Dubois--Violette--Henneaux Generalized complex, Airy potential, Beltrami Stress Functions}%


\begin{abstract}
  We reformulate the Elasticity complex and Saint–Venant's compatibility condition using the generalized differential complex of Dubois–-Violette--Henneaux. This is just a slight and natural modification of the de Rham complex to take account of the index symmetry of the tensors involved. An integrating formula to recover the displacement from the strain and similar to the Poincaré formula is provided. Finally, a Hodge star operator and a dual complex is introduced, which allows to recover stress potentials in dimensions 2 and 3.
\end{abstract}

\maketitle

\section*{Introduction}

An infinitesimal deformation is said to be compatible if a displacement vector field generates the symmetric covariant second-order tensor field of infinitesimal deformation. The latter is then the symmetric part of the Jacobian with respect to the displacement vector field. In this case, two problems arise.

The first consists of finding a compatibility condition. It would inform us about the existence of a displacement vector field generating the symmetric covariant second-order tensor field of compatible infinitesimal deformation. Barré de Saint-Venant was the first one to establish a necessary compatibility condition \cite{dSai1855}. He noticed that the symmetry of the gradient was not enough to guarantee the existence of a displacement vector field, because there are more components in the symmetric covariant second-order tensor field of infinitesimal deformation than in the displacement vector field. He derives twice the equation defining the symmetric covariant second-order tensor field of infinitesimal deformation and finds combinations that eliminate the displacement vector field. He obtained the necessary relations to have a solution for the inverse equations. They define a necessary compatibility condition of Saint-Venant by cancellation of a covariant fourth-order tensor field called the \emph{Saint-Venant tensor}.
Later, Cesàro \cite{Ces1906}, Volterra \cite{Vol1906}, and Love \cite{Lov1944} obtain the same compatibility condition. The first two use the fundamental lemma of integral calculus and Stokes' theorem. Love rewrites the displacement vector field using the Helmholtz-Stokes decomposition.

The second problem consists of finding an explicit formula for the displacement vector field solution of the problem, given a compatible infinitesimal strain. \textit{A priori}, nothing guarantees the integrability of the previous differential equations. Beltrami showed in 1886 that these conditions were locally sufficient \cite{ACG2006}. Some topological conditions on the domain are required to obtain global solutions \cite{Ces1906,Vol1906,Car1966}, like in de Rham cohomology. The problem for non simply connected domains was investigated in \cite{Y2013}.

In 1999, Eastwood \cite{Eas1999,Eas2000} used the general \emph{Bernstein-Gelfand-Gelfand construction} (BGG) to derive the Elasticity complex from the de Rham complex with value in the Lie algebra of the Euclidean group. To do this, he constructs commutative diagrams joining the Elasticity complex to the de Rham complex. The fundamental property $d^2=0$ of the Rham complex transfers to this Elasticity complex. The compatibility condition is recast as the vanishing of a symmetric second-order tensor field called the \emph{incompatibility tensor} $\Inc \bepsilon = 0$. Using this approach, a Poincaré path integral formula (an integrator) for elasticity was derived in \cite{SKE2020}.

These integrators allow \cite{CCG2007, CGM2007} to recover the displacement vector field from the strain tensor field as the major variable in the intrinsic elasticity model. Generalizations of the theory under weaker regularity \cite{CGM2009, CGM2010, CH2024} have also been considered. In \cite{Arnold2007}, by mimicking this construction in the discrete case, they derive new mixed finite elements for elasticity in a systematic manner from known discretizations of the de Rham complex. These elements appear to be simpler than the ones previously derived. The BGG approach has also applications for the finite element method \cite{AFW2006,Fal2008,AAA2023,Hu2024}.

In this paper, we propose a new approach of the Elasticity complex based on Dubois--Violette and Henneaux \cite{DH1999, DH2002} \emph{generalized complexes}. We consider it as a \emph{natural} approach since it corresponds just to a straightforward generalization of the de Rham complex for other types of index symmetries. No need to introduce additional structures and isomorphisms to relate the de Rham complex to the Elasticity complex like in the BGG approach. Moreover, \emph{Poincaré like integrators} are also provided in our framework. An extended definition of the exterior derivative as well as an extended version of the \emph{Hodge star operator} lead naturally to the definition of the Saint--Venant tensor, the $\Inc$ tensor and a duality which allows also for a new geometric interpretation of Airy (2D) and Beltrami (3D) stress potentials \cite{Car1966,Gur1963,GP2004,Sad2009,GP2015}.

\subsection*{Outline}

In \autoref{sec:Cesaro-Voltera}, we formulate in modern language the Cesàro-Volterra path integral formula, which provides a compatibility condition for a strain $\bepsilon$ to derive from a displacement field $\bxi$. This condition involves a fourth order tensor: the Saint-Venant tensor. In \autoref{sec:Ricci}, we explain why the vanishing of the fourth order Saint--Venant tensor is equivalent to the vanishing of a symmetric second-order tensor, the incompatibility tensor and that this is specific to the dimension 3. In \autoref{sec:deRham-complex}, we introduce the de Rham complex to familiarize the reader with differential complexes and illustrate it through the theory of Electromagnetism. The Elasticity complex is derived in \autoref{subsec:elasticity-complex} as a special case of Dubois–Violette theory of generalized differential complexes. An homotopy formula is provided and allows us to recover the Saint-Venant compatibility condition and the Cesàro–Volterra integrator. Finally, in \autoref{sec:stress-potentials}, we propose to interpret the Airy (2D) and Beltrami (3D) stress potentials using the dual of the Elasticity complex and an extended version of the Hodge star operator.

\subsection*{Notations}

We denote by $(\ee_{1},\ee_{2},\ee_{3})$ the canonical basis of $\RR^3$, and by $\bq$, the canonical Euclidean metric $\RR^3$ with components $(\delta_{ij})$, the Kronecker symbols, in this basis, where the latin indices $i, j, k, \ldots$ vary between $1$ and $3$.

We assume infinitesimal deformations. We denote by $\bxi$ the displacement vector field and by $\bxi^{\flat}=\bq \bxi=(\xi_{i})$, the corresponding displacement covector field.
The covariant derivative of the latter, $\nabla\bxi^{\flat}$, is decomposed into symmetric and skew-symmetric parts, respectively equal to the infinitesimal strain tensor field
\begin{equation}\label{def:strain}
  \bepsilon:=\frac{1}{2}\left(\nabla\bxi^\flat+(\nabla\bxi^\flat)^{\star}\right),
  \qquad
  \left( \; \emph{i.e.}, \;\;
  \varepsilon_{ij}=\frac{1}{2}\left(\partial_{j}\xi_{i}+\partial_{i} \xi_{j}\right)
  \;
  \right),
\end{equation}
and the infinitesimal rotation tensor field (or spin tensor)
\begin{equation}\label{def:spin}
  \bOmega:=\frac{1}{2}\left(\nabla\bxi^\flat-(\nabla\bxi^\flat)^{\star}\right),
  \qquad
  \left( \; \emph{i.e.}, \;\;
  \Omega_{ij}=\frac{1}{2}\left(\partial_{j}\xi_{i}-\partial_{i} \xi_{j}\right)
  \;
  \right),
\end{equation}
where $L^{\star}$ means the dual operator (or dual transpose) of a linear operator $L$. We introduce also the mixed version of these tensors
\begin{equation*}
  \widehat{\bepsilon} := \frac{1}{2}\left(\nabla\bxi + (\nabla\bxi)^{t}\right) \quad \text{and} \quad \widehat\bOmega := \frac{1}{2}\left(\nabla\bxi-(\nabla\bxi)^{t}\right),
\end{equation*}
where $L^{t} := \bq^{-1}L^{\star}\bq$ is the metric transpose of $L$.

We shall also introduced the curl of a second-order mixed tensor field $\bL = (\tensor{L}{^i_{j}})$ on $\RR^3$ as in \cite{Sal2005,SKE2020,Gar2022}. We define, in components, the \emph{column curl} $\rot^c \bL$ by
\begin{equation}\label{eq:rot-c}
  {(\rot^c L)^{i}}_{j} := \tensor{\epsilon}{^{ip}_{q}} \partial_{p}\tensor{L}{^{q}_{j}},
\end{equation}
and the \emph{row curl} $\rot^r \bL$ by
\begin{equation}\label{eq:rot-r}
  {(\rot^r L)^{i}}_{j} := \tensor{\epsilon}{_{j}^{pq}} \partial_{p}\tensor{L}{^{i}_{q}}.
\end{equation}
Here, the Levi-Civita symbol is defined by $\epsilon_{ijk}:=\det(e_i,e_j,e_{k})$. The operators $\rot^c \bL$ and $\rot^r \bL$ can be defined intrinsically by a contraction between $\nabla \bL$ and the Riemannian volume form $\vol_{\bq}$ \cite{Gar2022}. One can check that
\begin{equation*}
  \rot^r \bL = (\rot^c \bL^{t})^{t},
\end{equation*}
and in particular that $\rot^r \widehat{\bepsilon} = (\rot^c \widehat{\bepsilon})^{t}$.

Finally, we recall the linear isomorphism $j$ between $\mathbb{R}^3$ and the space of skew-symmetric endomorphisms
\begin{equation}\label{def:j}
  j : \omega \in \mathbb{R}^3 \longmapsto \widehat{\Omega} = \omega \times \cdot,
\end{equation}
which is given in the canonical basis $(\ee_{i})$ by the formulas
\begin{equation*}
  \tensor{\widehat{\Omega}}{^i_{j}} = -\tensor{\epsilon}{^i_{jk}}\omega^{k}, \quad \text{and} \quad \omega^{i} = - \frac{1}{2} \tensor{\epsilon}{^{i}_{j}^{k}} \tensor{\widehat{\Omega}}{^j_{k}}.
\end{equation*}

\section{Cesàro-Volterra path integral formula}
\label{sec:Cesaro-Voltera}

In 1906, Volterra \cite{Vol1906} (see also~\cite{VV1960}) introduced a path integral formula to recover the displacement vector field $\bxi$ corresponding to a given symmetric covariant strain field $\bepsilon$ together with a compatibility condition ensuring that this quadrature does not depend on the path of integration. Almost at the same time, Cesàro \cite{Ces1906} simplified Volterra's approach while leading to the same result.

To calculate the vector $\bxi(\xx_{1})$ at any point $\xx_{1}$ of a deformed medium delimited by a domain $U$ embedded in the Euclidean space $\espace \simeq (\RR^3,\bq)$, these authors fix an initial point $\xx_{0}\in U$ and a smooth path
\begin{equation*}
  t\in[0,1]\longmapsto \xx_{t} \in U,
\end{equation*}
joining $\xx_{0}$ to $\xx_{1}$. The fundamental lemma of integral calculus allows them to express the displacement vector $\bxi(\xx_{1})$ using the following quadrature
\begin{equation*}
  \bxi(\xx_{1}) = \bxi(\xx_{0}) + \int_{0}^{1} \dd{}{t}\bxi(\xx_{t})\,\rd t = \bxi(\xx_{0}) + \int_{0}^{1} \nabla \bxi(\xx_{t})\cdot \xx_{t}'\,\rd t .
\end{equation*}
To mark the dependence of the displacement $\bxi$ on $\xx_{1}$, Cesàro \cite{Ces1906} adds an integration constant, rewriting $\xx'_{t}=(\xx_{t}-\xx_{1})'$. Using the decomposition $\nabla\bxi=\widehat{\bepsilon}+ \widehat\bOmega$, into symmetric and skew-symmetric parts, they get
\begin{equation*}
  \bxi(\xx_{1}) = \bxi(\xx_{0}) + \int_{0}^{1}  \widehat{\bepsilon} (\xx_{t}) \cdot \xx_{t}'\,\rd t + \int_{0}^{1}  \widehat\bOmega (\xx_{t})\cdot (\xx_{t}-\xx_{1})'\,\rd t .
\end{equation*}
Then, an integration by parts in the second integral gives
\begin{equation*}
  \bxi(\xx_{1}) = \bxi(\xx_{0}) + \widehat\bOmega(\xx_{0})\cdot(\xx_{1}-\xx_{0}) + \int_{0}^{1} \widehat{\bepsilon}(\xx_{t}) \cdot \xx_{t}'\,\rd t - \int_{0}^{1} \left(\nabla_{\xx_{t}'}\widehat\bOmega(\xx_{t})\right)\cdot(\xx_{t}-\xx_{1})\,\rd t,
\end{equation*}
which is written in components as
\begin{equation*}
  \xi^{i}(\xx_{1}) = \xi^{i}(\xx_{0}) + {\widehat\Omega^{i}}_{j}(\xx_{0}) ({x_{1}}^{j}-{x_{0}}^{j}) + \int_{0}^{1} \tensor{\widehat{\varepsilon}}{^i_{k}}(\xx_{t}) (x^{k}_{t})'\,\rd t - \int_{0}^{1} \partial_{k}{\widehat\Omega^{i}}_{j}(\xx_{t}) (x_{t}^{j}-x_{1}^{j}) (x^{k}_{t})'\,\rd t.
\end{equation*}
The fundamental identity
\begin{equation}\label{eq:fundamental-identity}
  \partial_{k}{\widehat\Omega^{i}}_{j} = \delta^{ip} \partial_{k}\Omega_{pj} = \frac{1}{2}\delta^{ip}\left(\partial_{k}\partial_{j}\xi_{p} - \partial_{k}\partial_{p}\xi_{j}\right) = \delta^{ip}\left(\partial_{j}\varepsilon_{pk} - \partial_{p}\varepsilon_{jk}\right),
\end{equation}
allows them to eliminate $\partial_{k}\widehat{\Omega}_{ij}$ and to obtain finally
\begin{small}
  \begin{equation*}\label{eq:Cesaro-Voltera}
    \xi^{i}(\xx_{1}) = \xi^{i}(\xx_{0}) + \tensor{\widehat{\Omega}}{^i_j}(\xx_{0}) ({x_{1}}^{j}-{x_{0}}^{j})
    + \int_{0}^{1} \left[ \tensor{\widehat{\varepsilon}}{^i_{k}}(\xx_{t}) + (x_{1}^{j}-x_{t}^{j}) \tensor{\delta}{^{ip}} (\partial_{j}\varepsilon_{pk}(\xx_{t}) - \partial_{p}\varepsilon_{jk}(\xx_{t})) \right] (x^{k}_{t})'\,\rd t.
  \end{equation*}
\end{small}
In more intrinsic notations, and using the identity
\begin{equation*}
  \tensor{\epsilon}{^i_{jl}}\tensor{\epsilon}{^{lp}_q} = \delta^{ip}\delta_{jq} - \tensor{\delta}{^i_q}\tensor{\delta}{_j^p},
\end{equation*}
this Cesàro-Voltera formula recasts as:
\begin{equation*}
  \bxi(\xx) = \bxi(\xx_{0}) + \widehat{\Omega}(\xx_{0})\cdot (\xx-\xx_{0})+\int_{0}^{1} \widehat{\varepsilon}(\xx_{t})\cdot \xx_{t}'\, \rd t - \int_{0}^{1} \left(\xx-\xx_{t}\right)\times\left[(\rot^c\widehat{\bepsilon})(\xx_{t})\cdot \xx_{t}'\right]\rd t,
\end{equation*}
where $\times$ is the vector product on $\RR^3$, $\bxi(0)\in\RR^3$ and $\widetilde{\Omega}(\xx_{0})\in\mathfrak{so}_{3}$ (the space of skew-symmetric linear operators).

It remains to show that the previous path integral does not depend of the path chosen. In order to do that, which will lead to a compatibility condition, they introduce (in modern language) the one-forms
\begin{equation*}
  \balpha^{i} := \left(\tensor{\widehat{\varepsilon}}{^i_{k}}(\xx) + ({x_{1}}^{j}-x^{j}) \delta^{ip} (\partial_{j}\varepsilon_{pk}(\xx) - \partial_{p}\varepsilon_{jk}(\xx)) \right) \rd x^{k}, \qquad i = 1,2,3,
\end{equation*}
so that, if $\Gamma$ denotes the path $\xx_{t}$, we have
\begin{equation*}
  \int_{\Gamma} \balpha^{i} = \int_{0}^{1} \left[\tensor{\widehat{\varepsilon}}{^i_{k}}(\xx_{t}) + (x_{1}^{j}-x_{t}^{j}) \delta^{ip} (\partial_{j}\varepsilon_{pk}(\xx_{t}) - \partial_{p}\varepsilon_{jk}(\xx_{t})) \right](x^{k}_{t})'  \,\rd t.
\end{equation*}
Now, a necessary condition which ensures that this integral depends only on the endpoints and not of the path joining these two points is that the one-forms $\alpha_i$ are \emph{closed}, meaning that $\rd \balpha^{i} = 0$, where
\begin{equation*}
  (\rd \balpha^{i})_{kl} = \partial_{k}{\alpha^{i}}_{l} - \partial_{l}{\alpha^{i}}_{k} = ({x_{1}}^{j}-x^{j})\delta^{ip}\left[\partial_{k}(\partial_{j}\varepsilon_{pl}(\xx) - \partial_{p}\varepsilon_{jl}(\xx)) - \partial_{l}(\partial_{j}\varepsilon_{pk}(\xx) - \partial_{p}\varepsilon_{jk}(\xx))\right].
\end{equation*}
All these quantities vanish identically for every point $\xx_{1}$ if and only if
\begin{equation*}
  \partial_{k}\partial_{j}\varepsilon_{pl} - \partial_{k}\partial_{p}\varepsilon_{jl} - \partial_{l}\partial_{j}\varepsilon_{pk} + \partial_{l}\partial_{p}\varepsilon_{jk} = 0, \qquad \forall j,k,l,p.
\end{equation*}
This condition is designed as the vanishing of the fourth-order tensor field $\bW$, called the \emph{Saint--Venant tensor}, and given in components by
\begin{equation}\label{eq:SV-tensor}
  W_{ijkl}:=\partial_l\partial_{k}\varepsilon_{ij}-\partial_l\partial_{i}\varepsilon_{jk}+\partial_{i}\partial_{j}\varepsilon_{kl}-\partial_{k}\partial_{j}\varepsilon_{il}.
\end{equation}

\section{Two equivalent compatibility conditions}
\label{sec:Ricci}

The incompatibility condition provided by Cesàro and Voltera is given by the vanishing of the fourth-order Saint--Venant tensor field $\bW$. There is however a different compatibility criteria, widely used in the literature (e.g. \cite{SKE2020,Sal2005}), represented by the vanishing of the second-order tensor field
\begin{equation}\label{eq:rot-rot-conditions}
  \Inc \widehat{\bepsilon} := \rot^c\rot^r \widehat{\bepsilon}.
\end{equation}
It is therefore natural to ask what is the link between these two tensors.

In components, the covariant version of $\Inc \widehat{\bepsilon}$ can be written, by \eqref{eq:rot-c} and \eqref{eq:rot-r}, as
\begin{equation*}
  \left(\Inc\bepsilon\right)_{kl} = \tensor{\epsilon}{_{k}^{pi}}\tensor{\epsilon}{_{l}^{rj}}\partial_p\partial_r\varepsilon_{ij}.
\end{equation*}
Now, using the following identity
\begin{equation*}
  \tensor{\epsilon}{_{k}^{pi}}\tensor{\epsilon}{_l^{rj}} =
  \begin{vmatrix}
    \tensor{\delta}{_{kl}} & \tensor{\delta}{_{k}^r} & \tensor{\delta}{_{k}^j} \\
    \tensor{\delta}{^p_l}  & \tensor{\delta}{^{pr}}  & \tensor{\delta}{^{pj}}  \\
    \tensor{\delta}{^i_l}  & \tensor{\delta}{^{ir}}  & \tensor{\delta}{^{ij}}
  \end{vmatrix},
\end{equation*}
and a few calculations, one gets
\begin{equation*}
  \left(\Inc\bepsilon\right)_{kl} = - \tensor{\delta}{^{ij}} W_{ijkl} + \frac{1}{2} (\tensor{\delta}{^{pr}}\tensor{\delta}{^{ij}}W_{ijpr}) \tensor{\delta}{_{kl}}.
\end{equation*}

We shall summarize this result in the following proposition.

\begin{prop}\label{prop:trace}
  The symmetric covariant second-order tensor field  $\Inc \bepsilon$ is entirely determined by the trace of the fourth-order Saint--Venant tensor field $\bW$:
  \begin{equation*}
    \Inc\bepsilon = -\tr_{12}\bW+\Scal\, \bq,
    \quad \text{where} \quad
    \Scal :=  \frac{1}{2}\tr\left[\tr_{12}\bW\right] = \tr\left[\Inc \bepsilon\right],
  \end{equation*}
  or, in components:
  \begin{equation*}
    (\Inc\bepsilon)_{kl} = -\delta^{ij} \bW_{ijkl} + \Scal\,\delta_{kl}
    \quad \mathrm{where} \quad
    \Scal = \frac{1}{2}\delta^{kl}\delta^{ij}\bW_{ijkl}.
  \end{equation*}
\end{prop}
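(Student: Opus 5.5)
The plan is a direct component computation in the canonical orthonormal basis, where index positions do not matter and $\delta^{ij}=\delta_{ij}$. I would start from the covariant expression
\begin{equation*}
  (\Inc\bepsilon)_{kl} = \tensor{\epsilon}{_{k}^{pi}}\tensor{\epsilon}{_{l}^{rj}}\partial_p\partial_r\varepsilon_{ij},
\end{equation*}
obtained by composing \eqref{eq:rot-c} and \eqref{eq:rot-r}. I would then substitute the $3\times 3$ determinant identity for the product of two Levi-Civita symbols. Expanding the determinant gives six signed products of three Kronecker deltas.

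Contracting each of these six terms against $\partial_p\partial_r\varepsilon_{ij}$ and using the symmetry $\varepsilon_{ij}=\varepsilon_{ji}$ together with the commutation of partial derivatives, I expect to obtain
\begin{equation*}
  (\Inc\bepsilon)_{kl} = \delta_{kl}\left(\Delta \tr\bepsilon - \partial_i\partial_j\varepsilon_{ij}\right) - \partial_k\partial_l \tr\bepsilon - \Delta\varepsilon_{kl} + \partial_k\partial_j\varepsilon_{jl} + \partial_l\partial_j\varepsilon_{jk}.
\end{equation*}
Here $\Delta=\delta^{pr}\partial_p\partial_r$ and $\tr\bepsilon=\delta^{ij}\varepsilon_{ij}$.

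On the other side, I would trace the Saint--Venant tensor \eqref{eq:SV-tensor} over its first two indices:
\begin{equation*}
  \delta^{ij}W_{ijkl} = \partial_k\partial_l\tr\bepsilon - \partial_l\partial_i\varepsilon_{ik} + \Delta\varepsilon_{kl} - \partial_k\partial_j\varepsilon_{jl}.
\end{equation*}
Tracing once more over $k,l$ gives
\begin{equation*}
  \delta^{kl}\delta^{ij}W_{ijkl}=2\left(\Delta\tr\bepsilon-\partial_i\partial_j\varepsilon_{ij}\right).
\end{equation*}
Hence $\Scal=\frac12\delta^{kl}\delta^{ij}W_{ijkl}=\Delta\tr\bepsilon-\partial_i\partial_j\varepsilon_{ij}$. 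Comparing with the expansion of $\Inc\bepsilon$ term by term then yields
\begin{equation*}
  (\Inc\bepsilon)_{kl}=-\delta^{ij}W_{ijkl}+\Scal\,\delta_{kl}.
\end{equation*}

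For the last equality, $\tr[\Inc\bepsilon]=\Scal$, I would take the trace of this identity. The right-hand side gives $-2\Scal+3\Scal=\Scal$, where the factor $3$ is the dimension. This trace step is also where the restriction to dimension three enters.

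The only real obstacle is bookkeeping: tracking the signs of the six determinant terms and recognizing that the two mixed terms $\partial_k\partial_j\varepsilon_{jl}$ and $\partial_l\partial_j\varepsilon_{jk}$ match the corresponding terms of $\tr_{12}\bW$ after using the symmetry of $\bepsilon$. To check that the determinant expansion has the right overall sign, I would test it on a single component, for example $k=l=1$.
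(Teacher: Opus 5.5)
Your proposal is correct and follows the paper's route: the paper also starts from $(\Inc\bepsilon)_{kl}=\tensor{\epsilon}{_{k}^{pi}}\tensor{\epsilon}{_{l}^{rj}}\partial_p\partial_r\varepsilon_{ij}$, substitutes the $3\times 3$ determinant identity and compares with the traces of $\bW$. It compresses into ``a few calculations'' exactly the expansion you write out, and your intermediate formulas for $\Inc\bepsilon$, $\tr_{12}\bW$ and $\Scal=\Delta\tr\bepsilon-\partial_i\partial_j\varepsilon_{ij}$ all check out, as does the final trace step $-2\Scal+3\Scal=\Scal$ (dimension three is of course already built in earlier, through the three-index Levi-Civita symbol).
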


There is a close connection between the Saint-Venant tensor and the Riemann curvature tensor. Namely if $W_{ijkl}$ is a Saint-Venant tensor, then
\begin{equation*}
  R_{ijkl} := W_{ikjl}
\end{equation*}
has all the index symmetries of the Riemann curvature tensor field. It is well-known that in dimension $3$, the Riemann curvature tensor is entirely determined by its trace
\begin{equation*}
  R_{ik} := \delta^{jl}R_{ijkl},
\end{equation*}
called the \emph{Ricci tensor} field. More precisely, in dimension 3, the Riemann curvature tensor can be reconstructed knowing the Ricci tensor. This is false in dimension $d \ge 4$, since another independent traceless fourth-order tensor, known as the \emph{Weyl tensor}, is required in addition to the Ricci tensor to reconstruct the Riemann curvature tensor from its irreducible components (see~\cite{GHL2004}). Due to the close connection between the Saint-Venant tensor and the Riemann curvature tensor, a similar process applies and allows us to reconstruct the Saint-Venant tensor from the incompatibility tensor $\Inc\bepsilon$. This reconstruction is based on an adaptation of the \emph{Kulkarni-Nomizu product} defined in~\cite{GHL2004}. More precisely, given two symmetric covariant second-order tensors $\ba$ and $\bb$, we set
\begin{equation*}
  (\ba\owedge\bb)_{ijkl}=a_{ij}b_{kl}+a_{kl}b_{ij}-a_{jk}b_{il}-a_{il}b_{jk},
\end{equation*}
which is a covariant fourth-order tensor, having the index symmetries of the Saint--Venant tensor, and we have the following result.

\begin{prop}\label{prop:composantes irreductibles}
  In dimension 3, the covariant fourth-order Saint--Venant tensor $\bW$ is entirely determined by the symmetric covariant second-order incompatibility tensor $\Inc \bepsilon$:
  \begin{equation*}
    \bW = -\Inc \bepsilon\owedge\bq+\frac{1}{2}\Scal\bq\owedge\bq,
    \qquad\text{where}\qquad
    \Scal = \tr\left[\Inc \bepsilon\right] = \frac{1}{2}\tr\left[\tr_{12}\bW\right].
  \end{equation*}
  In components, we get
  \begin{equation*}
    \bW_{ijkl} = -(\Inc\bepsilon)_{ij}\delta_{kl} - (\Inc\bepsilon)_{kl}\delta_{ij} + (\Inc\bepsilon)_{jk}\delta_{il} + (\Inc\bepsilon)_{il}\delta_{jk} + \frac{\Scal}{2}(\delta_{ij}\delta_{kl}-\delta_{il}\delta_{jk}).
  \end{equation*}
\end{prop}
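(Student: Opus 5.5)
The plan is to use the link with Riemann curvature tensors in dimension 3, together with Proposition~\ref{prop:trace}. First I will record the index symmetries of $\bW$ directly from \eqref{eq:SV-tensor}:
\[
W_{ijkl}=W_{klij}=-W_{ilkj}=-W_{kjil}.
\]
It also satisfies a cyclic (Bianchi-type) identity, inherited by $R_{ijkl}:=W_{ikjl}$, which has the algebraic symmetries of a curvature tensor. The Kulkarni--Nomizu-type product $\ba\owedge\bb$ has the same symmetries, so both sides of the claimed identity live in the same space $\mathcal{W}$ of \emph{Saint--Venant-type tensors}.

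The key step is a dimension count in $d=3$. The space of algebraic curvature tensors has dimension $d^2(d^2-1)/12=6$, and the contraction $\tr_{12}\colon \mathcal{W}\to S^2\RR^3$ lands in a 6-dimensional space. I will show this trace map is injective, i.e. the Weyl part vanishes in dimension 3. One route is the classical fact that the Weyl tensor is identically zero when $d=3$ (cf.~\cite{GHL2004}). For a self-contained argument, I would show the map $\ba\mapsto \ba\owedge\bq$ from $S^2\RR^3$ to $\mathcal{W}$ is injective and hence, by dimensions, bijective. Concretely, I will compute $\tr_{12}(\ba\owedge\bq)$ explicitly:
\[
\delta^{ij}(a_{ij}\delta_{kl}+a_{kl}\delta_{ij}-a_{jk}\delta_{il}-a_{il}\delta_{jk})=(\tr\ba)\delta_{kl}+3a_{kl}-a_{lk}-a_{kl}=(\tr\ba)\,\delta_{kl}+a_{kl}.
\]
The map $\ba\mapsto\ba+(\tr\ba)\bq$ is invertible, so the composite $\tr_{12}\circ(\cdot\owedge\bq)$ is an isomorphism. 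Together with $\dim\mathcal{W}=6$, this gives that $\tr_{12}$ restricted to $\mathcal{W}$ is an isomorphism.

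It then suffices to check that both sides of the claimed formula have the same $\tr_{12}$. By Proposition~\ref{prop:trace}, $\tr_{12}\bW=-\Inc\bepsilon+\Scal\,\bq$. Using the formula above:
\begin{itemize}
\item $\tr_{12}(\Inc\bepsilon\owedge\bq)=\Inc\bepsilon+\Scal\,\bq$, since $\tr\Inc\bepsilon=\Scal$;
\item $\tr_{12}(\bq\owedge\bq)=\bq+3\bq=4\bq$.
\end{itemize}
Hence
\[
\tr_{12}\Bigl(-\Inc\bepsilon\owedge\bq+\tfrac12\Scal\,\bq\owedge\bq\Bigr)=-\Inc\bepsilon-\Scal\,\bq+2\Scal\,\bq=-\Inc\bepsilon+\Scal\,\bq,
\]
which matches. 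Injectivity then gives the identity, and the component form is just the expansion of $\owedge$.

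The main obstacle is establishing $\dim\mathcal{W}=6$, that is, that the Saint--Venant symmetries, including the cyclic identity, cut the space down exactly to curvature-type tensors in dimension 3. I would handle this by transporting the standard count for algebraic curvature tensors through $R_{ijkl}=W_{ikjl}$, checking carefully that the cyclic identity holds for \eqref{eq:SV-tensor}. If that proves delicate, a fallback is to use only the pair skew-symmetries: $\bW$ then corresponds to a symmetric bilinear form on $\Lambda^2\RR^3\cong\RR^3$, which already has dimension 6, and this suffices for the injectivity argument.
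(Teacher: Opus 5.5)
Your argument is correct and is essentially the paper's proof. Both use injectivity of $\ba\mapsto\ba\owedge\bq$ (from $\tr_{12}(\ba\owedge\bq)=\ba+(\tr\ba)\bq$), the count $\dim S^{2,2}\RR^{3}=6$ (which the paper simply asserts), and Proposition~\ref{prop:trace}; the only difference is that you check the candidate has the right $\tr_{12}$ instead of solving $\ba=-\Inc\bepsilon+\frac{1}{2}\Scal\,\bq$ directly.

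Two small cautions. First, since $(\bq\owedge\bq)_{ijkl}=2(\delta_{ij}\delta_{kl}-\delta_{il}\delta_{jk})$, expanding the intrinsic formula puts the coefficient $\Scal$, not $\frac{\Scal}{2}$, in front of $(\delta_{ij}\delta_{kl}-\delta_{il}\delta_{jk})$; your own trace check confirms $\Scal$ is correct. So the stated component display contains a factor-$2$ typo and is not ``just the expansion''. Second, in your fallback, the symmetry that makes $\bW$ a \emph{symmetric} form on $\Lambda^{2}\RR^{3}$ is $W_{ijkl}=W_{jilk}$, which is immediate from \eqref{eq:SV-tensor}. The relation $W_{ijkl}=W_{klij}$ that you recorded already follows from the two skew-symmetries, and those alone give only a $9$-dimensional space.
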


\begin{rem}
  In particular, in dimension 3, the conditions $\bW=0$ and $\Inc \bepsilon=0$ are equivalent. In higher dimension, this result is false. We can have $\Inc \bepsilon=0$, whereas $\bW \ne 0$. In dimension 2, the equation $\bW=0$ reduces to $\Scal=0$.
\end{rem}

\begin{proof}
  Consider the linear mapping
  \begin{equation*}
    \ba \longmapsto  \ba\owedge\bq, \qquad S^{2}\RR^{3} \to S^{2,2}\RR^{3},
  \end{equation*}
  where $S^{2}\RR^{3}$ is the space of symmetric second-order tensors and $S^{2,2}\RR^{3}$, the space of fourth-order tensors having the symmetry of the Saint-Venant tensor. If $\ba\owedge\bq = 0$, then, $\tr_{12}(\ba\owedge\bq) = \ba + (\tr \ba)\bq = 0$ and thus $\ba=0$, this mapping is injective. But $\dim S^{2}\RR^{3} = \dim S^{2,2}\RR^{3} = 6$, and this mapping is thus a linear isomorphism.

  Therefore, given $\bW \in S^{2,2}\RR^{3}$, one can find a unique $\ba \in S^{2}\RR^{3}$ such that $\bW = \ba\owedge\bq$. We have then successively
  \begin{equation*}
    \tr_{12} \bW = \ba + (\tr \ba)\bq, \quad \text{and} \quad \tr (\tr_{12} \bW) = 4 \tr \ba.
  \end{equation*}

  Now, by proposition~\ref{prop:trace}, we have
  \begin{equation*}
    \Inc\bepsilon = - \tr_{12}\bW + \Scal\, \bq,
    \quad \text{where} \quad
    \Scal :=  \frac{1}{2}\tr\left[\tr_{12}\bW\right] = \tr\left[\Inc \bepsilon\right],
  \end{equation*}
  from which we deduce that
  \begin{equation*}
    \ba = - \Inc\bepsilon + \frac{1}{2} \Scal\, \bq.
  \end{equation*}
  We have therefore
  \begin{equation*}
    \bW = -\Inc \bepsilon\owedge\bq+\frac{1}{2}\Scal\bq\owedge\bq .
  \end{equation*}
\end{proof}

\begin{rem}
  The fact that the Saint--Venant tensor reduces to a function in 2D and to a symmetric second-order tensor in 3D can also be understood using the Hodge star operator introduced at the end of \autoref{sec:deRham-complex}, see also \cite{Kroe1958,GP2004}.
\end{rem}

\section{The de Rham complex}
\label{sec:deRham-complex}

The exterior derivative extends the differential of a function to \emph{differential forms}, meaning \emph{alternate} covariant tensor fields (which change sign under the transposition of any pair of indices). More precisely, denoting by $\bOmega^{k}(M)$ the space of differential $k$-forms on a manifold $M$, the exterior derivative $\mathrm{d}$ is a differential operator of order one which transforms a differential $k$-form $\balpha\in\bOmega^{k}(M)$ into a differential $(k+1)$-form $\rd \balpha\in\bOmega^{k+1}(M)$, satisfying the rule called \emph{Cartan magic formula}
\begin{equation}\label{eq:Cartan-formula}
  \Lie_{X}\balpha = \rd  \iota_{X}\balpha + \iota_{X} \rd \balpha,
\end{equation}
for all vector fields $X$, where $\Lie_{X}$ is the Lie derivative and $\iota_{X}$ means the contraction of $X$ with $\alpha$ (if $\balpha \in \bOmega^{k}(M)$, then, $\iota_{X}\balpha = X \cdot \balpha \in \bOmega^{k-1}(M)$ and if $f \in \bOmega^{0}(M)$, then, $\iota_{X} f = 0$). This formula provides a recursive way to calculate the exterior derivative $\rd$. In particular, given a local coordinate system $(x^{i})$ on $M$, we get, in components:
\begin{align*}
  (\rd_{0} f)_{i}        & = \partial_{i}f, \qquad \text{for a function $f \in \bOmega^{0}(M)$},
  \\
  (\rd_{1}\balpha)_{ij}  & = \partial_{i}\alpha_{j} - \partial_{j}\alpha_{i}, \qquad \text{for a one-form $\balpha \in \bOmega^{1}(M)$},
  \\
  (\rd_{2}\bbbeta)_{ijk} & = \partial_{i}\beta_{jk} + \partial_{j}\beta_{ki} + \partial_{k}\beta_{ij}, \qquad \text{for a two-form $\bbeta \in \bOmega^{2}(M)$}.
\end{align*}

The exterior derivative generates a graded sequence of differential forms spaces called the \emph{de Rham differential complex}
\begin{equation*}
  \bOmega^{0}(M) \overset{\rd_{0}}{\longrightarrow} \bOmega^{1}(M) \overset{\rd_{1}}{\longrightarrow} \bOmega^2(M) \overset{\rd_{2}}{\longrightarrow} \bOmega^3(M) \overset{\rd_{3}}{\longrightarrow} \dotsb,
\end{equation*}
with the fundamental property that $\rd^{2} = \rd  \circ \rd  = 0$.

\begin{rem}
  In particular, when $U$ is an open subset of the Euclidean space $\RR^3$, then, the de Rham complex corresponds to the sequence
  \begin{equation*}
    \bOmega^{0}(U) \overset{\rd_{0}=\grad}{\scalebox{2}[1]{$\longrightarrow$}} \bOmega^{1}(U) \overset{\rd_{1}=\rot}{\scalebox{2}[1]{$\longrightarrow$}} \bOmega^2(U) \overset{\rd_{2}=\dive}{\scalebox{2}[1]{$\longrightarrow$}} \bOmega^3(U),
  \end{equation*}
  and the well-known properties $\rot\circ\grad=0$ and $\dive\circ\rot=0$ are special cases of the rule $\rd  \circ \rd  =0$.
\end{rem}

A $k$-form $\balpha$ on a manifold $M$ is \emph{closed} if $\rd_{k} \balpha = 0$. It is \emph{exact} if there exists a $(k-1)$-form $\bbbeta$ such that $\balpha = \rd_{k-1} \bbbeta$. Since $\rd  \circ \rd =0$, an exact form is always closed but the converse is not always true. However, when $M=U$ is an \emph{open convex set} of $\RR^{n}$, then, the converse is true. An easy way to prove this result is to build an integral operator
\begin{equation}\label{def:Poincare}
  \mathcal{K}_{k} \colon \bOmega^{k+1}(U) \longmapsto \bOmega^{k}(U),
\end{equation}
such that
\begin{equation}\label{eq:Rham-homotopy}
  \mathcal{K}_{k} \rd_{k} \balpha + \rd_{k-1} \mathcal{K}_{k-1}\balpha = \balpha, \qquad \forall \balpha \in \bOmega^{k}(U),
\end{equation}
and called an \emph{homotopy formula}. Indeed, if $\rd_{k}\balpha=0$, then, we have $\balpha = \rd_{k-1} \bbbeta$ where $\bbbeta = \mathcal{K}_{k-1}\balpha$. This is why $\mathcal{K}$ is called an \emph{integrator}. Moreover, such an integrator can be constructed explicitly. Without loss of generality, we can assume that $U$ contains the origin $0$. We shall then introduce the \emph{Poincaré integrator}
\begin{equation}\label{eq:Poincare-integrator}
  \mathcal{K}_{k-1} \balpha := \int^{0} _{-\infty} (\phi^{t})^{*}\iota_{X}\balpha \, \rd t,
\end{equation}
for $\alpha \in \bOmega^{k}(U)$, where $X(\xx) = \xx$ is the radial vector field, $\phi^{t}(\xx) = e^{t} \xx$, its flow and $(\phi^{t})^{*}$ means the pullback by $\phi^{t}$. This formula may be recast in a possibly more readable form as
\begin{equation*}
  (\mathcal{K}_{k-1} \balpha)(\xx) = \int^{1} _{0} s^{k-1} \xx \cdot \balpha(s\xx) \, \rd s .
\end{equation*}

When the $n$-dimensional manifold $M$ is equipped with a Riemannian metric $g$ (or more generally only a volume form), the De Rham complex
\begin{equation*}
  \bOmega^{0}(M) \overset{\rd  _0}{\longrightarrow} \bOmega^{1}(M) \overset{\rd  _1}{\longrightarrow} \bOmega^2(M) \overset{\rd  _2}{\longrightarrow} \dotsb \overset{\rd  _{n-1}}{\longrightarrow} \bOmega^{n}(M)
\end{equation*}
induces a \emph{dual complex}
\begin{equation*}
  \bOmega^0(M)^{*}\overset{\rd^{(1)*} _0}{\longleftarrow}
  \bOmega^1(M)^{*}\overset{\rd^{(1)*} _1}{\longleftarrow}
  \bOmega^2(M)^{*}\overset{\rd^{(1)*} _2}{\longleftarrow}
  \dotsb \overset{\rd^{(1)*} _{n-1}}{\longleftarrow}
  \bOmega^{n}(M)^{*},
\end{equation*}
where $\bOmega^{k}(M)^{*}$ is the space of alternate contravariant tensor fields of order $k$ and $\rd^{*}$ is the \emph{formal adjoint} of the exterior derivative $\rd$ (also called the \emph{co-differential}). It is defined implicitly by
\begin{equation*}
  \left\langle\alpha,\rd^{*}\bP\right\rangle := \int_{M} ( \alpha , \rd^{*} \bP )\vol_{g} =  \int_{M} ( \rd  \alpha , \bP ) \vol_{g} = \left\langle\rd\alpha,\bP\right\rangle ,
\end{equation*}
for all $\alpha \in \bOmega^{k-1}(M)$ and $\bP \in \bOmega^{k}(M)^{*}$ with compact support, where $\vol_{g}$ is the Riemannian volume element and $(\cdot, \cdot)$ means the total contraction between a covariant alternate tensor and a contravariant alternate tensor, both of order $k$. It inherits the property $\rd^{*} \circ \rd^{*}= 0$, from $\rd \circ \rd= 0$.

Introducing the \emph{Hodge star operator}
\begin{equation*}
  \star \colon \Omega^{k}(\RR^{n}) \to  \Omega^{n-k}(\RR^{n})^{*}, \qquad \omega_{i_{1} \dotsc i_{k}} \mapsto (\star \omega)^{j_{1} \dotsc j_{n-k}} = \epsilon^{i_{1} \dotsc i_{k}j_{1} \dotsc j_{n-k}}\omega_{i_{1} \dotsc i_{k}},
\end{equation*}
the co-differential can be rewritten (see \cite{Lan1999,GHL2004}) as
\begin{equation*}
  \rd^{*} = \star^{-1} \rd \star.
\end{equation*}

A remarkable application of the de Rham complex in Physics is the intrinsic geometric formulation of Maxwell equations in the Minkowski spacetime $(\RR^{4},\bbeta)$ where
\begin{equation*}
  \bbeta = - c^{2} {\rd t}^{2} + {\rd x}^{2} + {\rd y}^{2} + {\rd z}^{2}
\end{equation*}
is the Lorentz metric. The electric field $\bE = (E^i)$ and the magnetic induction $\bB = (B^i)$ have no intrinsic geometric meaning (they depend in fact on the observer). It is better to represent them, in a unified way, as the components of a $2$-form $\mathcal{F}$ on $\RR^{4}$, called the \emph{Faraday tensor}, which is written in components as
\begin{equation*}
  \mathcal{F} = (\mathcal{F}_{\mu \nu})=
  \begin{bmatrix}
    0    & E^1    & E^2    & E^3    \\
    -E^1 & 0      & -B^{3} & B^{2}  \\
    -E^2 & B^{3}  & 0      & -B^{1} \\
    -E^3 & -B^{2} & B^{1}  & 0
  \end{bmatrix}.
\end{equation*}
Using this formalism, the first two Maxwell equations (Maxwell-Faraday equation and Maxwell-Thomson equation)
\begin{equation*}
  \rot \bE + \frac{\partial\bB}{\partial t} = 0,
  \qquad
  \dive \bB = 0,
\end{equation*}
recast simply as the unique equation
\begin{equation}\label{eq:Maxwell-1}
  \rd \mathcal{F} = 0,
\end{equation}
which states that $\mathcal{F}$ is closed. Poincaré's lemma ensures then, at least locally, that there exists a $1$-form $\mathcal{A}$ such that $\mathcal{F} = d\mathcal{A}$, which can be moreover calculated using the Poincaré integrator~\eqref{eq:Poincare-integrator}. Designing its components by $\mathcal{A}:=(U,-A_{1},-A_{2},-A_{3})$, we recover the 3D magnetic potential vector $A:=(A_{1},A_{2},A_{3})$ and the electric potential $U$. The equation $\mathcal{F} = d\mathcal{A}$ means that
\begin{equation*}
  \bE = -\grad U - \partial_t A, \qquad \bB = \rot A.
\end{equation*}

To complete the system of Maxwell's equations, one must first introduce two new variables, the magnetic field $\bH = (H^{i})$ and the electric displacement $\bD = (D^{i})$, which are unified by introducing the $2$-forms $\mathcal{D}$, called the \emph{electromagnetic displacement tensor} and represented in components by
\begin{equation*}
  \mathcal{D} = (\mathcal{D}^{\mu \nu})=
  \begin{bmatrix}
    0   & -D^1   & -D^2   & -D^3   \\
    D^1 & 0      & -H^{3} & H^{2}  \\
    D^2 & H^{3}  & 0      & -H^{1} \\
    D^3 & -H^{2} & H^{1}  & 0
  \end{bmatrix}.
\end{equation*}
The last two Maxwell equations (Maxwell-Ampère equation and Maxwell-Gauss equation) are written as
\begin{equation*}
  \rot\bH-\partial_t\bD = \bJ, \qquad \dive\bD = \rho,
\end{equation*}
where $\rho$ is the charge density and $\bJ$ is the current density, which can be gathered into the quadrivector
\begin{equation*}
  \mathcal{J} := (\rho, J^{1}, J^{2}, J^{3}).
\end{equation*}
Using the intrinsic formalism, the last two Maxwell equations can be unified into the unique equation
\begin{equation}\label{eq:Maxwell-2}
  \rd^{*} \mathcal{D} = \mathcal{J},
\end{equation}
where $d^{*}$ is the \emph{co-differential} (or formal adjoint of $d$). Since $\rd^{*} \circ \rd^{*}= 0$, we get $\rd^{*} \mathcal{J} = 0$, which corresponds to the \emph{continuity equation}
\begin{equation*}
  \partial_t\rho + \dive \bJ = 0.
\end{equation*}
However, if the quadrivector current $\mathcal{J}$ is given, the full system of Maxwell equations \eqref{eq:Maxwell-1} and \eqref{eq:Maxwell-2} is under-determinate since they correspond to 8 scalar equations for 12 scalar variables represented by the components of the variables $\mathcal{F}$ and $\mathcal{D}$. Therefore, in order to close the system, one needs to introduce a \emph{constitutive law} between $\mathcal{F}$ and $\mathcal{D}$. If one remains in the linear domain, such a law is written as
\begin{equation*}
  \mathcal{D}^{\mu\nu} = \tensor{C}{^{\mu\nu\rho\kappa}}\mathcal{F}_{\rho\kappa}.
\end{equation*}

\begin{exam}
  For instance, for Maxwell's equations in the vacuum, the constitutive law is isotropic and written as
  \begin{equation*}
    \mathcal{D}^{\mu\nu} = \frac{1}{\mu_{0}} \eta^{\mu\rho}\eta^{\nu\kappa}\mathcal{F}_{\rho\kappa},
  \end{equation*}
  where $\mu_{0}$ is the magnetic permeability.
\end{exam}

In order to make later the analogy with the Elasticity complex more explicit, we have summarized this construction by the following diagram, where we have emphasised the connection between the complex and its dual through a constitutive relation between $\mathcal{F}$ and $\mathcal{D}$:
\begin{equation}\label{eq:Tonti-diagram-1}
  \begin{array}{ccccccc}
    \bullet & \stackrel\rd{\longrightarrow}      & \mathcal{A} & \stackrel\rd{\longrightarrow}      & \mathcal{F}               & \stackrel\rd{\longrightarrow}      & 0       \\
            &                                    &             &                                    & \rotatebox{90}{$\bowtie$} &                                    &
    \\
    0       & \stackrel{\rd^{*}}{\longleftarrow} & \mathcal{J} & \stackrel{\rd^{*}}{\longleftarrow} & \mathcal{D}               & \stackrel{\rd^{*}}{\longleftarrow} & \bullet
  \end{array}
\end{equation}

\section{The Elasticity complex}
\label{subsec:elasticity-complex}

The Dubois-Violette theory~\cite{DH1999,DH2002} generalizes de Rham's complexes for tensors with other index symmetries. The idea is to adapt the de Rham complex directly for covariant tensor fields which have the symmetries of the elasticity tensors rather than alternate forms. We insist on the fact that this construction is much more natural and simple than the BGG formalism \cite{Eas1999,Eas2000,SKE2020,AFW2006, Fal2008}.

This construction relies on Young's symmetrizations (\autoref{sec:Dubois-Violette}) which we illustrate first by reformulating de Rham's complex. Rather than defining the exterior derivative using Cartan magic formula \eqref{eq:Cartan-formula}, we could have used the canonical covariant derivative $\nabla$ on $\RR^{n}$ and define it on $k$-forms by the following formula
\begin{equation*}
  (\rd_{k}\alpha) (\bX_{1}, \dotsc, \bX_{k+1}) = \sum_{\sigma \in \mathcal{S}_{k+1}} \varepsilon(\sigma) \left(\nabla_{\bX_{\sigma(k+1)}} \alpha \right)(\bX_{\sigma(1)}, \dotsc, \bX_{\sigma(k)})
\end{equation*}

To build the Elasticity complex in a similar way, we need only to use the index symmetries of the involved tensors. This is done using the generalized Dubois--Violette complex (\autoref{sec:Dubois-Violette}) with $N = 2$. In dimension 3, it is given by
\begin{equation}\label{eq:spin2-complex}
  \Omega_{2}^{0}(\RR^{3})\ \stackrel{\rd_{0}}{\to } \  \Omega_{2}^{1}(\RR^{3})\ \stackrel{\rd_{1}}{\to } \ \Omega_{2}^{2}(\RR^{3}) \ \stackrel{\rd_{2}}{\to } \ \Omega_{2}^{3}(\RR^{3}) \ \stackrel{\rd_{3}}{\to } \ \Omega_{2}^{4}(\RR^{3}) \ \stackrel{\rd_{4}}{\to } \ \Omega_{2}^{5}(\RR^{3}) \ \stackrel{\rd_{5}}{\to } \ \Omega_{2}^{6}(\RR^{3}).
\end{equation}
The symmetries of the tensor spaces $\Omega_{2}^{k}(\RR^{3})$ are encoded by the following sequence of Young's tableaux
\begin{equation*}
  \bullet \qquad \young(1) \qquad \young(12) \qquad \young(12,3) \qquad \young(12,34) \qquad \young(12,34,5) \qquad \young(12,34,56)
\end{equation*}
and where the differential operators $\rd_{k}$ are given by (\ref{exam:DV-derivatives}).
One significative difference with the de Rham complex is that the sequence of operators $\rd_{k}$ satisfies
\begin{equation*}
  \rd_{k+2} \, \rd_{k+1} \, \rd_{k} = 0,
\end{equation*}
or, in short terms $\rd^{3}= 0$, rather than $\rd^{2} = 0$ for the de Rham complex. To recover such a formula, it is necessary to "skip steps", giving rise to the following Elasticity complex
\begin{equation}\label{def:elasticity-complex}
  \Omega_{2}^{1}(\RR^{3}) \ \stackrel{D_{1}}{\longrightarrow} \
  \Omega_{2}^{2}(\RR^{3}) \ \stackrel{D_{2}}{\longrightarrow} \
  \Omega_{2}^{4}(\RR^{3}) \ \stackrel{D_{3}}{\longrightarrow} \ \Omega_{2}^{5}(\RR^{3}),
\end{equation}
where
\begin{align*}
   & (D_{1}\bxi^\flat)_{ij} := (\rd_{1}\bxi^\flat)_{ij}  = \frac{1}{2}(\partial_{j}\xi_{i}+\partial_{i}\xi_{j}),
  \\
   & ({D_{2}}\bepsilon)_{ijkl} := (\rd_{3} \, \rd_{2}\bepsilon)_{ijkl} = \frac{1}{3}\left(\partial_l\partial_{k}\varepsilon_{ij} - \partial_l\partial_{i}\varepsilon_{jk}+\partial_{i}\partial_{j}\varepsilon_{kl} - \partial_{k}\partial_{j}\varepsilon_{il}\right),
  \\
   & ({D_3}\bW)_{ijklm} :=  (\rd_{4}\bW)_{ijklm} = \frac{1}{2} (\partial_{i}W_{kjml} + \partial_{k}W_{mjil} + \partial_{m}W_{ijkl}).
\end{align*}
and which satisfies $D_{j+1} \circ D_{j} = 0$. We deduce thus immediately the following result.
\begin{prop}[Saint--Venant compatibility condition]
  If the symmetric covariant second-order tensor field of infinitesimal strain $\bepsilon$ derives from a displacement covector field $\bxi^\flat$, that is $\bepsilon = D_{1}\bxi^\flat$, then,
  \begin{equation*}
    D_{2}\bepsilon = D_{2}D_{1}\bxi^\flat = 0,
    \qquad\text{i.e.}\qquad
    \partial_l\partial_{k}\varepsilon_{ij}-\partial_l\partial_{i}\varepsilon_{jk}+\partial_{i}\partial_{j}\varepsilon_{kl}-\partial_{k}\partial_{j}\varepsilon_{il}=0.
  \end{equation*}
\end{prop}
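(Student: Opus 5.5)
The plan is to derive the statement from the complex property $D_{2}\circ D_{1}=0$, and, because that property rests on the abstract identity $\rd^{3}=0$ of the Dubois--Violette complex, to also confirm it by a direct computation in coordinates. The first step is formal. By definition $D_{1}=\rd_{1}$ and $D_{2}=\rd_{3}\rd_{2}$, so $D_{2}D_{1}=\rd_{3}\rd_{2}\rd_{1}$. This vanishes by the generalized nilpotency $\rd_{k+2}\rd_{k+1}\rd_{k}=0$ with $N=2$, taken at $k=1$. Hence, if $\bepsilon=D_{1}\bxi^\flat$, we get $D_{2}\bepsilon=0$.

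The second step gives the component form. The explicit expression for $D_{2}$ recorded just above the statement is
\begin{equation*}
  (D_{2}\bepsilon)_{ijkl}=\tfrac{1}{3}W_{ijkl},
\end{equation*}
where $\bW$ is the Saint--Venant tensor \eqref{eq:SV-tensor}. Vanishing of $D_{2}\bepsilon$ is therefore exactly the displayed condition.

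To make the argument independent of the general theory, which has not been proved at this point, I will verify $\bW=0$ directly. Substitute $\varepsilon_{ij}=\tfrac12(\partial_{j}\xi_{i}+\partial_{i}\xi_{j})$ into
\begin{equation*}
  \partial_l\partial_{k}\varepsilon_{ij}-\partial_l\partial_{i}\varepsilon_{jk}+\partial_{i}\partial_{j}\varepsilon_{kl}-\partial_{k}\partial_{j}\varepsilon_{il}.
\end{equation*}
This produces eight third-order terms of the form $\tfrac12\partial\partial\partial\,\xi$. Each term is determined by which component of $\xi$ is differentiated and by the multiset of three derivative indices, and derivatives commute. The pairs then cancel as follows:
\begin{itemize}
  \item $\partial_l\partial_k\partial_j\xi_i$ from the first term cancels against the corresponding term from the fourth;
  \item $\partial_l\partial_k\partial_i\xi_j$ from the first term cancels against the one from the second;
  \item $\partial_l\partial_i\partial_j\xi_k$ from the second term cancels against the one from the third;
  \item $\partial_i\partial_j\partial_k\xi_l$ from the third term cancels against the one from the fourth.
\end{itemize}
The sum is therefore zero, by Schwarz's theorem on the symmetry of mixed partial derivatives of a $\Cinf$ (or at least $C^{3}$) field.

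There is no real obstacle, since this is bookkeeping. The only delicate point is consistency with the Dubois--Violette framework: one must check that the explicit formula given for $\rd_{3}\rd_{2}$ agrees, including the normalization $\tfrac13$, with the Young symmetrizer convention. Since $D_{2}\bepsilon$ is a nonzero multiple of $\bW$, that constant is irrelevant to the conclusion, and the direct computation above settles the statement in any case.
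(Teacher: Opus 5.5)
Your proof is correct and follows the paper's argument: the paper deduces the statement from $D_{2}D_{1}=\rd_{3}\rd_{2}\rd_{1}=0$, which is the identity $\rd^{3}=0$ for $N=2$, and reads off the component form from $(D_{2}\bepsilon)_{ijkl}=\tfrac13 W_{ijkl}$. Your term-by-term cancellation after substituting $\varepsilon_{ij}=\tfrac12(\partial_{j}\xi_{i}+\partial_{i}\xi_{j})$ also checks out, and it is a useful addition, because the paper only cites $\rd^{N+1}=0$ from Dubois--Violette and Henneaux without proving it.
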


The remarkable result is that the Elasticity complex is locally exact like the de Rham complex and we can produce an explicit formula for the displacement. This is due to the existence of an \emph{homotopy formula}
\begin{equation*}
  D_{1}K_{1}\bepsilon + K_{2}D_{2}\bepsilon = \bepsilon, \qquad \forall \bepsilon \in \Omega_{2}^{2}(\RR^{3}),
\end{equation*}
where $ K_{1} \colon \Omega_{2}^{2}(\RR^{3}) \to  \Omega_{2}^{1}(\RR^{3})$ and $K_{2} \colon \Omega_{2}^{4}(\RR^{3}) \to  \Omega_{2}^{2}(\RR^{3})$ are integral operators. More precisely, we introduce first the integrator
\begin{equation}\label{eq:obstruction}
  K_{2} \bW(\xx)= \int_{0}^{1} \rd t  \int_{0}^{t} s \bW(s \xx):(\xx \otimes \xx) \,ds .
\end{equation}
whose components are
\begin{equation*}
  (K_{2} \bW)_{ij} = \int_{0}^{1} \rd t  \int_{0}^{t} sx^{k}x^{l} \bW_{ijkl}(sx)\, ds .
\end{equation*}
Using the relations
\begin{equation*}
  \frac{d}{\rd t}\left(t\varepsilon_{ij}(tx)\right) = \varepsilon_{ij}(tx) + tx^{k}\pdi{k}\varepsilon_{ij}(tx),
  \qquad
  \frac{d}{ds}\left(s\pdi{k}\varepsilon_{ij}(sx)\right) = \pdi{k}\varepsilon_{ij}(sx) + sx^{l}\pdij{kl}\varepsilon_{ij}(sx),
\end{equation*}
and integrating by parts, we deduce that
\begin{align*}
  (K_{2} D_{2}\varepsilon)_{ij}(x) & = \varepsilon_{ij}(x) - \int_{0}^{1} \varepsilon_{ij}(tx) \, \rd t - \int_{0}^{1} tx^{k}\left( \pdi{i}\varepsilon_{jk}(tx) +  \pdi{j}\varepsilon_{ik}(tx)\right) \rd t
  \\
                                   & \quad + \int_{0}^{1} \rd t  \int_{0}^{t} x^{k}\left( \pdi{i}\varepsilon_{jk}(sx) + \pdi{j}\varepsilon_{ik}(sx) -\partial_{k}\varepsilon_{ij}(sx)\right) ds
  \\
                                   & \quad + \int_{0}^{1} \rd t  \int_{0}^{t} s x^{k}x^{l} \pdij{ij}\varepsilon_{kl}(sx)\, ds.
\end{align*}
Now, we look for an integral operator $K_{1} \colon \Omega_{2}^{2}(\RR^{3}) \to  \Omega_{2}^{1}(\RR^{3})$, depending on the first jet of $\bepsilon$, and such that
\begin{equation*}
  K_{2}D_{2}\varepsilon + D_{1}K_{1}\varepsilon = \varepsilon, \qquad \forall \varepsilon.
\end{equation*}
Therefore, we are looking for $K_{1}\bepsilon$ as a linear combination of
\begin{align*}
  (A\bepsilon)_{i} & := \int_{0}^{1} x^{k} \varepsilon_{ik}(tx) \,\rd t ,
  \\
  (B\bepsilon)_{i} & := \int_{0}^{1} \rd t  \int_{0}^{t} x^{k}x^{l} \pdi{l}\varepsilon_{ik}(sx) \,ds ,
  \\
  (C\bepsilon)_{i} & := \int_{0}^{1} \rd t  \int_{0}^{t} x^{k}x^{l} \pdi{i}\varepsilon_{kl}(sx) \,ds.
\end{align*}
Calculating
\begin{align*}
  (D_{1}A\varepsilon)_{ij} & := \int_{0}^{1} \varepsilon_{ij}(tx) \, \rd t + \frac{1}{2} \int_{0}^{1} tx^{k}\left( \pdi{i}\varepsilon_{jk}(tx) + \pdi{j}\varepsilon_{ik}(tx)\right) \rd t,
  \\
  (D_{1}B\varepsilon)_{ij} & :=\int _{0} ^{1} \rd t\int_{0} ^{t} x^{k}\partial_{k}\varepsilon_{ij}(sx)\,ds+
  \frac{1}{2} \int_{0}^{1} t  x^{k}\left( \pdi{i}\varepsilon_{jk}(sx) + \pdi{j}\varepsilon_{ik}(sx) \right) \rd t
  \\
  (D_{1}C\varepsilon)_{ij} & := \int_{0}^{1} \rd t  \int_{0}^{t} x^{k}\left( \pdi{i}\varepsilon_{jk}(sx) + \pdi{j}\varepsilon_{ik}(sx) \right) ds +  \int_{0}^{1} \rd t  \int_{0}^{t} s x^{k}x^{l} \pdij{ij}\varepsilon_{kl}(sx)\, ds,
\end{align*}
we observe that
\begin{equation*}
  (K_{2} D_{2}\varepsilon)_{ij} = \varepsilon_{ij} - (D_{1}A\varepsilon)_{ij} -  (D_{1}B\varepsilon)_{ij} + (D_{1}C\varepsilon)_{ij}.
\end{equation*}
By identification with the homotopy formula we deduce that
\begin{equation*}
  K_{1} =  A +  B - C
\end{equation*}
is the solution to the problem. A calculation and a simple integration by parts gives
\begin{equation*}
  (K_{1}\varepsilon)_{i}
  = \int_{0}^{1} x^{k} \varepsilon_{ik}(tx) \,\rd t
  + \int_{0}^{1} (1-t)\,x^{k}x^{l}\left( \pdi{l}\varepsilon_{ik}(tx)-  \pdi{i}\varepsilon_{kl}(tx)\right)\rd t,
\end{equation*}
which is exactly the Cesàro-Volterra formula \eqref{eq:Cesaro-Voltera}.

\begin{rem}
  When the order two symmetric tensor $K_{2}\bW\neq 0$ it represents an obstruction term measuring the degree of incompatibility of the deformation.
\end{rem}

\section{The dual Elasticity complex and stress potentials}
\label{sec:stress-potentials}

The Airy potential~\cite{TGA1970,Mus1977,Sad2009} was introduced by George Biddell Airy~\cite{Air1863} in the 19th century, as a scalar function to simplify the resolution of plane stress problems in 2D. In the two-dimensional case, it automatically satisfies the equilibrium equations by expressing the stresses as second derivatives of the Airy function:
\begin{equation*}
  \sigma_{xx} = \frac{\partial^2 \varphi}{\partial y^2}, \quad \sigma_{yy} = \frac{\partial^2 \varphi}{\partial x^2}, \quad \sigma_{xy} = -\frac{\partial^2 \varphi}{\partial x \partial y}.
\end{equation*}

Several approaches have been proposed to generalize the Airy potential to three dimensions~\cite{Lov1944,Sch1953,LS1954,Gur1963,Teo1972,Mus1977,Tin1996,Pom2016}. For instance, Maxwell and Morera Stress Functions correspond to different triples of stress functions which allow the representation of a 3D stress field while satisfying the equilibrium equations. They partially generalize the Airy concept but with increased complexity. Beltrami-Schaefer Potentials~\cite{Sch1953} is another generalization which uses combinations of harmonic and biharmonic functions to describe stresses in 3D by introducing the potential as a second-order symmetric tensor. A difficulty with this formulation is, however, that it merges compatibility conditions on the strain and ones on the stress by introducing, \textit{a priori}, the constitutive law into the formulation of the problem. Pommaret in~\cite{Pom2016} seems the first to have clarified the subject using the framework of \emph{Spencer cohomology}~\cite{Spe1969}.

In the present work, we shall use the simpler approach of Dubois--Violette \& Henneaux to recover the 2-dimensional and 3-dimensional stress potentials. It relies on the fact that, like the \emph{co-differential} is usually defined using the \emph{Hodge star operator} \cite{Lan1999} on the de Rham complex, the same procedure applies to generalized complexes. Therefore, we shall denote by $\Omega^{k}_{2}(\RR^{n})^{*}$ the space of \emph{contravariant tensor fields} having the symmetries associated with the Young tableau $(D^{k}_{2})$ (see \autoref{sec:Dubois-Violette}) and we write this dual complex as
\begin{equation}\label{eq:spin2-dual-complex}
  \Omega^{0}_{2}(\RR^{n})^{*} \
  \underset{}{\stackrel{\rd_{0}^{*}}{\longleftarrow}}
  \cdots
  \underset{}{\stackrel{\rd_{k-1}^{*}}{\longleftarrow}}\
  \Omega^{k}_{2}(\RR^{n})^{*} \
  \underset{}{\stackrel{\rd_{k}^{*}}{\longleftarrow}} \
  \cdots
  \
  \underset{}{\stackrel{\rd_{2n-1}^{*}}{\longleftarrow}} \
  \Omega^{2n}_{2}(\RR^{n})^{*}
  ,
\end{equation}
where $\rd_{k}^{*}$ denotes the \emph{generalized co-differential} defined in arbitrary dimension in \autoref{sec:Dubois-Violette}, using the generalized Hodge star operator \eqref{def:generalised-Hodge}. In the following, we are interested in dimension 2 and then, in dimension 3.

In dimension 2, we get
\begin{equation*}\label{eq:2D-complex}
  \Omega_{2}^{0}(\RR^{2})\
  \stackrel{\rd_0}{\longrightarrow} \
  \Omega_{2}^{1}(\RR^{2})\
  \stackrel{\rd_1}{\longrightarrow} \
  \Omega_{2}^{2}(\RR^{2}) \
  \stackrel{\rd_2}{\longrightarrow} \
  \Omega_{2}^{3}(\RR^{2})
  \stackrel{\rd_3}{\longrightarrow} \
  \Omega_{2}^{4}(\RR^{2}),
\end{equation*}
and its dual complex is
\begin{equation*}\label{eq:dual-2D-complex}
  \Omega_{2}^{0}(\RR^{2})^*\
  \stackrel{\rd^* _0}{\longleftarrow} \
  \Omega_{2}^{1}(\RR^{2})^*\
  \stackrel{\rd^* _1}{\longleftarrow} \
  \Omega_{2}^{2}(\RR^{2})^* \
  \stackrel{\rd^* _2}{\longleftarrow} \
  \Omega_{2}^{3}(\RR^{2})^*
  \stackrel{\rd^* _3}{\longleftarrow} \
  \Omega_{2}^{4}(\RR^{2})^*.
\end{equation*}
The Hodge star isomorphisms defined in \eqref{def:generalised-Hodge} are given by
\begin{align*}
  \star_{0} \colon \Omega_{2}^{0}(\RR^{2}) & \to \Omega^{4}_{2}(\RR^{2})^{*}, &  & \varphi          \mapsto A^{ijkl} = \epsilon^{ik}\epsilon^{jl} \varphi,
  \\
  \star_{1} \colon \Omega_{2}^{1}(\RR^{2}) & \to \Omega^{3}_{2}(\RR^{2})^{*}, &  & \xi_{l}          \mapsto \Gamma^{ijk} = \epsilon^{ik}\epsilon^{jl} \xi_{l},
  \\
  \star_{2} \colon \Omega_{2}^{2}(\RR^{2}) & \to \Omega^{2}_{2}(\RR^{2})^{*}, &  & \varepsilon_{kl} \mapsto \sigma^{ij} = \epsilon^{ik}\epsilon^{jl} \varepsilon_{kl},
  \\
  \star_{3} \colon \Omega_{2}^{3}(\RR^{2}) & \to \Omega^{1}_{2}(\RR^{2})^{*}, &  & K_{jkl}          \mapsto \xi^{i} = \epsilon^{ik}\epsilon^{jl} K_{jkl},
  \\
  \star_{4} \colon \Omega_{2}^{4}(\RR^{2}) & \to \Omega^{0}_{2}(\RR^{2})^{*}, &  & W_{ijkl}         \mapsto f = \epsilon^{ik}\epsilon^{jl} W_{ijkl},
\end{align*}
and their inverses by
\begin{align*}
  \star_{0}^{-1} \colon \Omega^{4}_{2}(\RR^{2})^{*} & \to \Omega_{2}^{0}(\RR^{2}), &  & A^{ijkl}     \mapsto \varphi = \frac{1}{4} \epsilon_{ik}\epsilon_{jl} A^{ijkl},
  \\
  \star_{1}^{-1} \colon \Omega^{3}_{2}(\RR^{2})^{*} & \to \Omega_{2}^{1}(\RR^{2}), &  & \Gamma^{ijk} \mapsto \xi_{l} = \frac{1}{2} \epsilon_{ik}\epsilon_{jl} \Gamma^{ijk},
  \\
  \star_{2}^{-1} \colon \Omega^{2}_{2}(\RR^{2})^{*} & \to \Omega_{2}^{2}(\RR^{2}), &  & \sigma^{ij}   \mapsto \varepsilon_{kl} = \epsilon_{ik} \epsilon_{jl} \sigma^{ij},
  \\
  \star_{3}^{-1} \colon \Omega^{1}_{2}(\RR^{2})^{*} & \to \Omega_{2}^{3}(\RR^{2}), &  & \xi^{i}       \mapsto  K_{jkl} = \frac{1}{2} \epsilon_{ik}\epsilon_{jl} \xi^{i},
  \\
  \star_{4}^{-1} \colon \Omega^{0}_{2}(\RR^{2})^{*} & \to \Omega_{2}^{4}(\RR^{2}), &  & f             \mapsto W_{ijkl} = \frac{1}{4} \epsilon_{ik}\epsilon_{jl} f.
\end{align*}
Now, since
\begin{align*}
   & (\rd_{0} \varphi)_{i}       = \partial_{i} \varphi,
  \\
   & (\rd_{1} \bxi^{\flat})_{ij} = \frac{1}{2}(\partial_{j}\xi_{i} + \partial_{i}\xi_{j}),
  \\
   & (\rd_{2}\bepsilon)_{ijk} = \frac{2}{3}(\partial_{k}\varepsilon_{ij} - \partial_{i}\varepsilon_{jk}),
\end{align*}
we deduce therefore that
\begin{align*}
   & (D_{1}^{*} \bsigma)^{i} = \left(\star_{3} \, \rd_{2} \, \star_{2}^{-1} \bsigma\right)^{i} = \frac{4}{3} \partial_{j} \sigma^{ij}
  \\
   & (D_{2}^{*} \bA)^{ij}    = \left(\star_{2} \, \rd_{1} \, \rd_{0} \ \star^{-1}_{0} \bA\right)^{ij} = \left(\star_{2} \, \rd_{1} \, \rd_{0} \ \varphi\right)^{ij} = \epsilon^{ik}\epsilon^{jl} \partial_{k} \partial_{l} \varphi,
\end{align*}
where $\varphi := \star^{-1}_{0} \bA$, and where we have used the following identities
\begin{equation*}
  \epsilon^{ij}\epsilon_{ik} =\delta^j _k, \qquad \epsilon^{ij}\epsilon_{ij} =2 .
\end{equation*}
Therefore, the exactness of the dual complex~\eqref{eq:dual-2D-complex} (inherited from the exactness of the original complex~\eqref{eq:2D-complex}) implies that if $\dive \bsigma = 0$, which is equivalent to $D_{1}^{*} \bsigma = 0$, then, there exists a function $\varphi$ such that
\begin{equation*}
  \sigma^{ij} = (D_{2}^{*} \bA)^{ij} = \epsilon^{ik}\epsilon^{jl} \partial_{k} \partial_{l} \varphi,
\end{equation*}
or, in other words, that
\begin{equation*}
  \sigma^{11} = \partial_{y}^{2} \varphi, \qquad  \sigma^{12} = -\partial_{xy}^{2} \varphi, \qquad \sigma^{22} = \partial_{x}^{2} \varphi,
\end{equation*}
where $\varphi$ is the \emph{Airy potential}.

In dimension 3, we get
\begin{equation*}\label{eq:3D-complex}
  \Omega_{2}^{0}(\RR^{3})\
  \stackrel{\rd_{0}}{\longrightarrow} \
  \Omega_{2}^{1}(\RR^{3})\
  \stackrel{\rd_{1}}{\longrightarrow} \
  \Omega_{2}^{2}(\RR^{3}) \
  \stackrel{\rd_2}{\longrightarrow} \
  \Omega_{2}^{3}(\RR^{3}) \
  \stackrel{\rd_3}{\longrightarrow} \
  \Omega_{2}^{4}(\RR^{3})
  \stackrel{\rd_4}{\longrightarrow} \
  \Omega_{2}^{5}(\RR^{3})
  \stackrel{\rd_5}{\longrightarrow} \
  \Omega_{2}^{6}(\RR^{3})
  ,
\end{equation*}
and its dual complex is
\begin{equation*}\label{eq:dual-3D-complex}
  \Omega_{2}^{0}(\RR^{2})^*\
  \stackrel{\rd^* _{0}}{\longleftarrow} \
  \Omega_{2}^{1}(\RR^{2})^*\
  \stackrel{\rd^* _{1}}{\longleftarrow} \
  \Omega_{2}^{2}(\RR^{2})^* \
  \stackrel{\rd^* _2}{\longleftarrow}\
  \Omega_{2}^{3}(\RR^{2})^*
  \stackrel{\rd^* _3}{\longleftarrow}\
  \Omega_{2}^{4}(\RR^{2})^*
  \stackrel{\rd^* _4}{\longleftarrow }\
  \Omega_{2}^{5}(\RR^{2})^*
  \stackrel{\rd^* _5}{\longleftarrow }\
  \Omega_{2}^{6}(\RR^{2})^*.
\end{equation*}
We have now
\begin{align*}
   & \star_{4} \colon \Omega_{2}^{4}(\RR^{3})          \to \Omega^{2}_{2}(\RR^{3})^{*}, &  & W_{klmn}     \mapsto \sigma^{ij} = \epsilon^{ikm}\epsilon^{jln} W_{klmn},
  \\
   & \star_{5} \colon \Omega_{2}^{5}(\RR^{3})          \to \Omega^{1}_{1}(\RR^{3})^{*}, &  & B_{jklmn}    \mapsto \xi^{i} = \epsilon^{ikm}\epsilon^{jln} B_{jklmn},
  \\
   & \star_{2}^{-1} \colon \Omega_{2}^{4}(\RR^{3})^{*} \to \Omega_{2}^{2}(\RR^{3}),     &  & A^{ijkl}     \mapsto \varepsilon_{mn} = \frac{1}{4} \epsilon_{ikm} \epsilon_{jln} A^{ijkl},
  \\
   & \star_{4}^{-1} \colon \Omega_{2}^{2}(\RR^{3})^{*} \to \Omega_{2}^{4}(\RR^{3}),     &  & \sigma^{ij}  \mapsto  W_{klmn} = \frac{1}{4} \epsilon_{ikm}\epsilon_{jln} \sigma^{ij},
\end{align*}
and
\begin{align*}
   & (\rd_{2}\bepsilon)_{ijk} = \frac{2}{3}(\partial_{k}\varepsilon_{ij} - \partial_{i}\varepsilon_{jk}),
  \\
   & (\rd_{3}\bK)_{ijkl} = \frac{1}{4}(\partial_{l}K_{ijk} + \partial_{k}K_{jil} + \partial_{j}K_{kli} + \partial_{i}K_{lkj} ),
  \\
   & (\rd_{4}\bW)_{ijklm} = \frac{1}{2} (\partial_{i}W_{kjml} + \partial_{k}W_{mjil} + \partial_{m}W_{ijkl}).
\end{align*}
We deduce therefore that
\begin{align*}
   & \left(D_{1}^{*} \bsigma\right)^{j} = \left(\star_{5} \, \rd_{4} \, \star_{4}^{-1} \bsigma\right)^{j} = \frac{3}{2} \partial_{i} \sigma^{ij}
  \\
   & \left(D_{2}^{*} \bA\right)^{ij} = \left(\star_{4} \, \rd_{3} \, \rd_{2} \ \star^{-1}_{2} \bA\right)^{ij} = \left(\star_{4} \, \rd_{3} \, \rd_{2} \ \bphi\right)^{ij} =  \epsilon^{imk}\epsilon^{jnl} \partial_{k}\partial_{l} \phi_{mn},
\end{align*}
where $\bphi = \frac{4}{3} \star^{-1}_{2} \bA$ and where we have used the following identities
\begin{equation*}
  \epsilon^{ijk}\epsilon_{imn} = \delta^{j}_{m}\delta^{k}_{n} - \delta^{j}_{n}\delta^{k}_{m}, \qquad \epsilon^{ijk}\epsilon_{ijn} = 2\delta^{k}_{n}, \qquad \epsilon^{ijk}\epsilon_{ijk} = 6.
\end{equation*}
Therefore, in 3D, the exactness of the complex~\eqref{eq:3D-complex} implies that if $\dive \bsigma = 0$, which is equivalent to $D_{1}^{*} \bsigma = 0$, then, there exists a tensor field $\bphi \in \Omega_{2}^{2}(\RR^{3})$ such that
\begin{equation*}
  \sigma^{ij} = \epsilon^{imk}\epsilon^{jnl} \partial_{k}\partial_{l} \phi_{mn}.
\end{equation*}
The symmetric second-order tensor $\bphi$ is known in Mechanics as the \emph{Beltrami stress tensor} and its components as \emph{Beltrami stress functions} \cite{Kroe1958,Car1966,GP2004,Sad2009,GP2015}.

To conclude this section, it is important to emphasise the analogy between the diagram~\eqref{eq:Tonti-diagram-1} for electromagnetism and the following one for linear elasticity

\begin{equation}\label{eq:Tonti-diagram-2}
  \begin{array}{ccccccc}
    \bxi^{\flat} & \stackrel{\rd_{1}}{\longrightarrow}    & \bepsilon                 & \stackrel{\rd_{3}\rd_{2}}{\longrightarrow}        & \bW & \stackrel{\rd_{4}}{\longrightarrow}    & 0       \\
                 &                                        & \rotatebox{90}{$\bowtie$} &                                                   &     &                                        &
    \\
    0            & \stackrel{\rd_{1}^{*}}{\longleftarrow} & \bsigma                   & \stackrel{\rd_{2}^{*}\rd_{3}^{*}}{\longleftarrow} & \bA & \stackrel{\rd_{4}^{*}}{\longleftarrow} & \bullet
  \end{array}
\end{equation}

These kind of diagrams are known as \emph{Tonti diagrams}, see~\cite{Sou1992,DH1999,DH2002,Ton2003} and summarize the compatibility relations between some physical quantities, together with their dual quantities, the whole picture being connected by a constitutive law, here between $\bepsilon$ and $\bsigma$.

\section*{Conclusion}

We have proposed a new interpretation of the Elasticity complex using Dubois-Violette--Henneaux's theory \cite{DH1999,DH2002}, which differentiates from de Rham complex only through the definition of the exterior derivative, which takes account of the index symmetries of the tensors involved. This is why we consider this approach as natural, since it does not require to introduce \textit{ad hoc} isomorphisms, such as in the BGG approach. The fundamental property of de Rham complex, $d^2=0$ transfers directly but with $d^3=0$, rather than $d^2=0$, which explains why second-order derivatives are involved in the Saint--Venant compatibility condition, contrary to first order derivatives in the compatibility conditions of the de Rham complex. An homotopy formula has been provided which allows to recover Cesàro-Volterra path integral formula and produces also a new obstruction term measuring the degree of incompatibility for the strain. We also furnished an explanation about the number of independent components of the Saint--Venant tensor in dimension 2 and 3, and the link between the various expressions of this compatibility condition. Finally, we have provided a natural formulation of the Airy potential (2D) and the Beltrami stress potential (3D), using the dual of the Elasticity complex and the introduction of a Hodge star operator for this complex.

\appendix

\section{Generalized differential complexes}
\label{sec:Dubois-Violette}

Dubois-Violette and Henneaux \cite{DH1999, DH2002} have generalized de Rham's complex for covariant tensor fields with other index symmetries obeying certain rules. To better understand these rules, it is necessary to introduce first \emph{Young diagrams} and \emph{Young tableaux}.

A \emph{Young diagram} $Y$ represents graphically the decomposition of an integer $k$ into a partition $(k_{1}, \dotsc , k_{p})$ where $k = k_{1} + \dotsb + k_{p}$ and $k_{1} \ge \dotsb \ge k_{p} \ge 1$. For instance, the partitions
\begin{equation*}
  (1,1,1,1), \quad (2,1,1), \quad (2,2), \quad (3,1), \quad (4),
\end{equation*}
of the same integer $k=4$ correspond respectively to the following Young diagrams
\begin{equation*}
  \yng(1,1,1,1) \qquad \yng(2,1,1) \qquad \yng(2,2) \qquad \yng(3,1) \qquad  \yng(4)
\end{equation*}

A \emph{Young tableau} $D$ is a filling of a Young diagram $Y$ by the integers $1,2,\dotsc , k$. For instance, the following tableaux
\begin{equation*}
  \young(12,3) \qquad \young(13,2) \qquad \young(21,3) \qquad \young(23,1) \qquad \young(31,2) \qquad \young(32,1)
\end{equation*}
correspond to all the possible filling of the Young diagram
\begin{equation*}
  \yng(2,1)
\end{equation*}

To each Young tableau $D$ of size $k$, one associates a subspace $S^{D}(\RR^{n})^{*}$ of the space $\bigotimes^{k}(\RR^{n})^{*}$ of covariant tensors of order $k$, defined in
the following way. A tensor $\bT = (T_{i_{1} \dotsb i_{k}})$ belongs to $S^{D}(\RR^{n})^{*}$ if and only if:
\begin{enumerate}
  \item $\bT$ is alternate on each column of $D$. This means that if $i_{p}$ and $i_{q}$ belongs to the same column $C_{j}$ of $D$, then,
        \begin{equation*}
          T_{i_{1} \dotsb i_{q} \dotsb i_{p} \dotsb i_{n}} = - T_{i_{1} \dotsb i_{p} \dotsb i_{q} \dotsb i_{n}};
        \end{equation*}
  \item The total alternation of $\bT$ on the columns $C_{j} = \set{i_{j_{1}}, \dotsc ,i_{{j_{k}}}}$ of $D$ and an index $i_{j_{k+1}}$ in an adjacent box on the right of this column vanishes. This means that if $\mathfrak{S}_{k+1}$ denotes the permutation group of the elements $\set{i_{j_{1}}, \dotsc , i_{j_{k}},i_{j_{k+1}}}$, then,
        \begin{equation*}
          \sum_{\sigma \in \mathfrak{S}_{k+1}} \varepsilon(\sigma)\, \sigma \star \bT = 0,
        \end{equation*}
        where $\varepsilon(\sigma) = \pm 1$ is the signature of the permutation $\sigma$ and $\sigma \star \bT$ is the action of the permutation group on its components, and defined by
        \begin{equation*}
          \left(\sigma \star \bT\right)_{i_{1}\dotsb i_{n}} := T_{\sigma^{-1}(i_{1})\dotsb \sigma^{-1}(i_{n})}.
        \end{equation*}
\end{enumerate}

The dimension of the vector space $S^{D}(\RR^{n})^{*}$ is
\begin{equation*}
  \dim S^{D}\RR^{n} = \frac{\prod_{i,j}(n+j-i)}{\prod_{i,j} h_{Y}(i,j)},
\end{equation*}
where $h_{Y}(i,j)$ is the \emph{hook length} of the cell $(i,j)$ in $Y$, which is defined as 1 + the number of cells immediately at the right of $(i,j)$ + the number of cells immediately under the cell $(i,j)$.

\begin{exam}
  Let $\Omega$ in $S^{D}\RR^{n}$ where $D$ is the following Young tableau
  \begin{equation*}
    \young(1,2)
  \end{equation*}
  Then $\Omega$ has the following index symmetries
  \begin{equation*}
    \Omega_{ij} = -\Omega_{ji}.
  \end{equation*}
  These symmetries correspond to those of the spin tensor. More generally, a Young Tableau which consists of a unique column of $k$ boxes corresponds to the symmetries of an alternate tensor of degree $k$.
\end{exam}

\begin{exam}
  Let $\varepsilon$ in $S^{D}\RR^{n}$ where $D$ is the following Young tableau
  \begin{equation*}
    \young(12)
  \end{equation*}
  Then $\varepsilon$ has the following index symmetries
  \begin{equation*}
    \varepsilon_{ij} = \varepsilon_{ji}.
  \end{equation*}
  These symmetries correspond to those of the strain tensor. More generally, a Young Tableau which consists of a unique row of $k$ boxes corresponds to the symmetries of a totally symmetric tensor of order $k$.
\end{exam}

\begin{exam}
  Let $\bK$ in $S^{D}\RR^{n}$ where $D$ is the following Young tableau
  \begin{equation*}
    \young(12,3)
  \end{equation*}
  Then, $\bK$ has the following index symmetries
  \begin{align*}
     & K_{ijk} = - K_{kji},                                           \\
     & K_{ijk} + K_{jki} + K_{kij} - K_{jik} - K_{kji} - K_{ikj} = 0,
  \end{align*}
  but the later identity recasts (using the first one) as
  \begin{equation*}
    K_{ijk} + K_{jki} + K_{kij} = 0.
  \end{equation*}
\end{exam}

\begin{exam}
  Let $R$ in $S^{D}\RR^{n}$ where $D$ is the following Young tableau
  \begin{equation*}
    \young(13,24)
  \end{equation*}
  Then $R$ has the following index symmetries
  \begin{align*}
     & R_{jikl} = - R_{ijkl},               \\
     & R_{ijlk} = - R_{ijkl},               \\
     & R_{ijkl} + R_{jkil} + R_{kijl} = 0 , \\
     & R_{ijkl} + R_{iklj} + R_{iljk} = 0 .
  \end{align*}
  These symmetries correspond to those of the Riemann curvature tensor. It is known that it has the additional symmetry $R_{klij} = R_{jikl}$, which can be deduced from the preceding ones.
\end{exam}

\begin{exam}
  Let $W$ in $S^{D}\RR^{n}$ where $D$ is the following Young tableau
  \begin{equation*}
    \young(12,34)
  \end{equation*}
  Then $W$ has the following index symmetries
  \begin{align*}
     & W_{kjil} = - W_{ijkl},               \\
     & W_{ilkj} = - W_{ijkl},               \\
     & W_{ijkl} + W_{jkil} + W_{kijl} = 0 , \\
     & W_{ijkl} + W_{iklj} + W_{iljk} = 0 , \\
  \end{align*}
  and the additional symmetry deduced from the preceding ones $W_{klij} = W_{ijkl}$. These symmetries correspond to those of the Saint-Venant tensor~\eqref{eq:SV-tensor}.
\end{exam}

Given a Young tableau $D$ of size $k$, one introduces the subgroup $C$ of $\mathfrak{S}_{k}$ which preserves the columns of $D$ and the subgroup $R$, which preserves the rows of $D$.

\begin{exam}
  Let $D$ be the following Young tableau
  \begin{equation*}
    \young(13,24)
  \end{equation*}
  of size $4$. Then,
  \begin{equation*}
    C = \set{e, (12), (34), (12)(34)} \quad \text{and} \quad R = \set{e,(13), (24), (13)(24)}.
  \end{equation*}
\end{exam}

Then, one introduces the following operators on tensors $\bT$ of order $k$
\begin{equation*}
  A_D * \bT := \sum_{\tau \in C} \varepsilon(\tau)\, \tau \star \bT, \qquad S_D := \sum_{\sigma \in R} \sigma \star \bT ,
\end{equation*}
and finally
\begin{equation*}
  F_D \star \bT = \frac{1}{\mu(Y)} A_D \star (S_D \star \bT),
\end{equation*}
where $\mu(Y)$ is a normalization constant (depending on the Young diagram $Y$ encoded by $D$, rather than $D$ itself) which ensures that $F$ is a projector \cite{Ful1997}, meaning that $F^{2} = F$. It is defined as follows:
\begin{equation*}
  \mu(Y) = \prod_{i,j} h_{Y}(i,j),
\end{equation*}
where $h_{Y}(i,j)$ is the \emph{hook length} of the cell $(i,j)$ in $Y$, which is defined as 1 + the number of cells immediately at the right of $(i,j)$ + the number of cells immediately under the cell $(i,j)$.

\begin{exam}
  Consider the following Young tableau
  \begin{equation*}
    D = \young(12,3) \quad \text{with shape} \quad Y = \yng(2,1)
  \end{equation*}
  We get
  \begin{equation*}
    R = \set{e,(12)}, \quad \text{and} \quad C = \set{e, (13)},
  \end{equation*}
  and thus
  \begin{equation*}
    (S_D \star \bT)_{ijk} = T_{ijk} + T_{jik}, \quad (A_D \star \bT)_{ijk} = T_{ijk} - T_{kji}.
  \end{equation*}
  We have therefore
  \begin{equation*}
    \left(A_D \star (S_D \star \bT)\right)_{ijk} = (S_D \star \bT)_{ijk} - (S_D \star \bT)_{kji} = (T_{ijk} + T_{jik}) - (T_{kji} + T_{jki}),
  \end{equation*}
  whereas
  \begin{equation*}
    \mu(Y) = h_{Y}(1,1) \times h_{Y}(1,2) \times h_{Y}(2,1) = 3 \times 1 \times 1 = 3.
  \end{equation*}
  We get finally
  \begin{equation*}
    (F_D \star \bT)_{ijk} = \frac{1}{3} (T_{ijk} + T_{jik} - T_{kji} - T_{jki}).
  \end{equation*}
\end{exam}

\begin{exam}
  Consider the following Young tableau
  \begin{equation*}
    D = \young(12,34) \quad \text{with shape} \quad Y = \yng(2,2)
  \end{equation*}
  We get
  \begin{equation*}
    R = \set{e,(12),(34),(12)(34)}, \quad \text{and} \quad C = \set{e, (13), (24), (13)(24)},
  \end{equation*}
  and thus
  \begin{align*}
    (S_D \star \bT)_{ijkl} & = T_{ijkl} + T_{jikl} + T_{ijlk} + T_{jilk}, \\
    (A_D \star \bT)_{ijkl} & = T_{ijkl} - T_{kjil} - T_{ilkj} + T_{klij}.
  \end{align*}
  We have therefore
  \begin{align*}
    \left(A_D \star (S_D \star \bT)\right)_{ijkl} & = (S_D \star \bT)_{ijkl} - (S_D \star \bT)_{kjil} - (S_D \star \bT)_{ilkj} + (S_D \star \bT)_{klij} \\
                                                  & = T_{ijkl} + T_{jikl} + T_{ijlk} + T_{jilk} - T_{kjil} - T_{jkil} - T_{kjli} - T_{jkli}             \\
                                                  & - T_{ilkj} - T_{likj} - T_{iljk} - T_{lijk} + T_{klij} + T_{lkij} + T_{klji} + T_{lkji},
  \end{align*}
  whereas
  \begin{equation*}
    \mu(Y) = h_{Y}(1,1) \times h_{Y}(1,2) \times h_{Y}(2,1) \times h_{Y}(2,2) = 3 \times 2 \times 2 \times 1 = 12.
  \end{equation*}
\end{exam}

We shall now define the differential generalized complexes introduced by Dubois--Violette \& Henneaux in~\cite{DH1999,DH2002}. Given an integer $N \ge 1$, we define the following sequence of Young tableaux $(D_{N}^{k})$ ($k\ge 1$), starting with $k=1$ with the tableau
\begin{equation*}
  \young(1)
\end{equation*}
and where the tableau numbered $k+1$ is obtained from the tableau numbered $k$ by adding at each step, a new cell numbered $k+1$ on the first row which length does not exceed $N-1$ or otherwise on a new row at the end of the diagram, and so on.

\begin{exam}[Case $N=1$]
  \begin{equation*}
    \young(1) \qquad \young(1,2) \qquad \young(1,2,3) \qquad \dotsb
  \end{equation*}
\end{exam}

\begin{exam}[Case $N=2$]\label{ex:symmetries}
  \begin{equation*}
    \young(1) \qquad \young(12) \qquad \young(12,3) \qquad \young(12,34) \qquad \young(12,34,5) \qquad \dotsb
  \end{equation*}
\end{exam}

\begin{exam}[Case $N=3$]
  \begin{equation*}
    \young(1) \qquad \young(12) \qquad \young(123 )\qquad \young(123,4) \qquad \young(123,45) \qquad \young(123,456) \qquad \dotsb
  \end{equation*}
\end{exam}

\begin{defn}\label{def:DV-operators}
  Given $N \ge 1$, the generalized complex, introduced in ~\cite{DH1999,DH2002}, is defined as the sequence of spaces of tensor fields $(\Omega_{N}^{k}(\RR^{n}))_{k \ge 1}$, each one, having the index symmetry of the Young Tableau $(D_{N}^{k})$, and where $\Omega_{N}^{0}(\RR^{n}) = \Cinf(\RR^{n})$. The differentials of the complex
  \begin{equation*}
    {\rd^{(N)}}_{k} : \Omega_{N}^{k}(\RR^{n}) \to \Omega_{N}^{k+1}(\RR^{n}),
  \end{equation*}
  are defined by
  \begin{equation*}
    ({\rd^{(N)}}_{k} \bT)_{i_{1} \dotsc i_{k+1}} := (F_{D_{N}^{k+1}} \star \nabla \bT)_{i_{1} \dotsc i_{k+1}},
  \end{equation*}
  where $\nabla \omega$ is the gradient of the tensor field $\bT$. The differentials of the complex satisfy
  \begin{equation*}
    {\rd^{(N)}}_{k+N}\, \dotsb \, {\rd^{(N)}}_{k} = 0, \qquad \forall k \ge 0,
  \end{equation*}
  which we shall summarize by the equation $\rd^{N+1} = 0$.
\end{defn}

\begin{rem}
  For $N=1$, we recover the de Rham complex and $ \rd_{k} = \rd$ is just the exterior derivative up to a scaling factor. Using this graphical notation, the de Rham complex on $\RR^{n}$ can be recast as
  \begin{equation*}
    \young(1) \overset{\rd_{1}}{\longrightarrow} \young(1,2) \overset{\rd_{2}}{\longrightarrow} \young(1,2,3) \overset{\rd_{3}}{\longrightarrow} \dotsb
  \end{equation*}
  where $\rd_{k} = (1/k!)\, d$.
\end{rem}

\begin{exam}\label{exam:DV-derivatives}
  When $N=2$, and we have set ${\rd^{(2)}}_{k} = \rd_{k}$, we obtain the following formulas.
  \begin{itemize}
    \item If $\bxi^\flat \in \Omega_{2}^{1}(\RR^{n})$, we get
          \begin{equation*}
            (\rd_{1}\bxi^\flat)_{ij} = \frac{1}{2}(\partial_{i}\xi_{j} + \partial_{j}\xi_{i}).
          \end{equation*}
    \item If $\bepsilon \in \Omega_{2}^{2}(\RR^{n})$, we get
          \begin{equation*}
            (\rd_{2}\bepsilon)_{ijk} = \frac{2}{3}(\partial_{k}\varepsilon_{ij} - \partial_{i}\varepsilon_{jk}).
          \end{equation*}
    \item If $\bK \in \Omega_{2}^{3}(\RR^{n})$, we get
          \begin{equation*}
            (\rd_{3}\bK)_{ijkl} = \frac{1}{4}(\partial_{l}K_{ijk} + \partial_{k}K_{jil} + \partial_{j}K_{kli} + \partial_{i}K_{lkj} ).
          \end{equation*}
    \item If $\bW \in \Omega_{2}^{4}(\RR^{n})$, we get
          \begin{equation*}
            (\rd_{4}\bW)_{ijklm} = \frac{1}{2} (\partial_{i}W_{kjml} + \partial_{k}W_{mjil} + \partial_{m}W_{ijkl}).
          \end{equation*}
  \end{itemize}
\end{exam}

For the de Rham complex, the \emph{formal adjoint} or \emph{co-differential} $\rd^{*}$ of $\rd$ is defined using the \emph{Hodge star operator}, defined as
\begin{equation*}
  \star_{k} \colon \Omega^{k}(\RR^{n}) \to \Omega^{n-k}(\RR^{n})^{*}, \qquad \omega_{i_{1} \dotsb i_{k}} \mapsto \epsilon^{j_{1} \dotsb j_{n-k}i_{1} \dotsb i_{k}}\omega_{i_{1} \dotsb i_{k}},
\end{equation*}
where $\epsilon^{i_{1} \dotsb i_{n}}$ is the (contravariant) Levi-Civita symbol. The fundamental observation~\cite{DH1999} is that this operator can be extended to generalized complexes. In particular, for $N=2$, we get the \emph{generalized Hodge operator}
\begin{equation*}
  \star_{k} \colon \Omega^{k}_{2}(\RR^{n})\longrightarrow\Omega^{2n-k}_{2}(\RR^{n})^{*},
\end{equation*}
defined by
\begin{equation}\label{def:generalised-Hodge}
  \left(\star_{k}\bT\right)^{j_{1}\cdots j_{2n-k}} :=
  \left\{
  \begin{array}{ll}
    \epsilon^{i_{1} i_{3} \cdots i_{k-1} j_{1} j_{3} \cdots j_{2n-k-1}} \epsilon^{i_{2} i_{4} \cdots i_{k} j_{2} j_{4} \cdots j_{2n-k}} T_{i_{1}\cdots i_{k}}, & \hbox{if $k$ is even;} \\
    \epsilon^{i_{1} i_{3} \cdots i_{k} j_{1} j_{3} \cdots j_{2n-k}} \epsilon^{i_{2} i_{4} \cdots i_{k-1} j_{2} j_{4} \cdots j_{2n-k-1}} T_{i_{1}\cdots i_{k}}, & \hbox{if $k$ is odd.}
  \end{array}
  \right.
\end{equation}

This linear operator is invertible and allows to define the \emph{generalized co-differential}
\begin{equation*}
  \rd^{\star}_{k} \colon \Omega_{2}^{k+1}(\RR^{n})^{*} \longrightarrow \Omega_{2}^{k}(\RR^{n})^{*},
\end{equation*}
given by
\begin{equation}\label{eq:generalized-d-star}
  \rd^{\star}_{k} := \star_{2n-k} \, \rd_{2n-k-1} \, {\star_{2n-k-1}}^{-1},
\end{equation}
and which satisfies
\begin{equation*}
  \rd^{\star}_{k} \, \rd^{\star}_{k+1} \, \rd^{\star}_{k+2} = 0.
\end{equation*}

\begin{rem}
  Since the generalized Dubois-Violette--Henneaux complex is exact, its dual complex is exact also. Indeed, an homotopy formula such as
  \begin{equation*}
    d_{2n-k-1} K_{2n-k-1} +  K_{2n-k}d_{2n-k} = \id, \qquad \text{on $\Omega_{2}^{2n-k}(\RR^{n})$}
  \end{equation*}
  induces an homotopy formula for the dual complex as
  \begin{multline*}
    \left(\star_{2n-k} \, \rd_{2n-k-1} \, {\star_{2n-k-1}}^{-1}\right) \left(\star_{2n-k-1} \, K_{2n-k-1} \, {\star_{2n-k}}^{-1}\right)
    \\
    + \left(\star_{2n-k} \,  K_{2n-k} \, {\star_{2n-k+1}}^{-1}\right) \left(\star_{2n-k+1} \, \rd_{2n-k} \, {\star_{2n-k}}^{-1}\right) = \id,
  \end{multline*}
  or in a more readable expression as
  \begin{equation*}
    d^{*}_{k} \,  K^{*}_{k} +  K^{*}_{k-1} d^{*}_{k-1} = \id, \qquad \text{on  $\Omega_{2}^{k}(\RR^{n})^{*}$},
  \end{equation*}
  where
  \begin{equation*}
    K^{*}_{k} := \star_{2n-k-1} \, K_{2n-k-1} \, \star^{-1}_{2n-k} \colon \Omega_{2}^{k}(\RR^{n})^{*} \to \Omega_{2}^{k+1}(\RR^{n})^{*}.
  \end{equation*}
\end{rem}


\begin{thebibliography}{10}

\bibitem{Air1863}
G.~B. Airy.
\newblock On the strains in the interior of beams.
\newblock {\em Philosophical Transactions of the Royal Society of London},
  153:49--79, Dec. 1863.

\bibitem{AAA2023}
R.~Aloev, I.~Abdullah, A.~Akbarova, and S.~Juraev.
\newblock Lyapunov stability of the numerical solution of the {S}aint-{V}enant
  equation.
\newblock In {\em {AIP} Conference Proceedings}, volume 2484. {AIP} Publishing.
\newblock Issue: 1.

\bibitem{ACG2006}
C.~Amrouche, P.~G. Ciarlet, L.~Gratie, and S.~Kesavan.
\newblock On {Saint} {Venant's} compatibility conditions and {Poincar\'e's}
  lemma.
\newblock {\em Comptes Rendus. Math\'ematique}, 342(11):887--891, 2006.

\bibitem{AFW2006}
D.~N. Arnold, R.~S. Falk, and R.~Winther.
\newblock Differential complexes and stability of finite element methods ii:
  The elasticity complex.
\newblock In D.~N. Arnold, P.~B. Bochev, R.~B. Lehoucq, R.~A. Nicolaides, and
  M.~Shashkov, editors, {\em Compatible Spatial Discretizations}, pages 47--67,
  New York, NY, 2006. Springer New York.

\bibitem{Arnold2007}
D.~N. Arnold, R.~S. Falk, and R.~Winther.
\newblock Mixed finite element methods for linear elasticity with weakly
  imposed symmetry.
\newblock {\em Mathematics of Computation}, 76(260):1699–1724, Oct. 2007.

\bibitem{CH2024}
A.~Cap and K.~Hu.
\newblock {BGG} sequences with weak regularity and applications.
\newblock {\em Found Comput Math 24}, page 1145–1184, 2024.

\bibitem{Car1966}
D.~Carlson.
\newblock On the completeness of the {{B}eltrami} stress functions in continuum
  mechanics.
\newblock {\em Journal of Mathematical Analysis and Applications},
  15(2):311--315, Aug. 1966.

\bibitem{Ces1906}
E.~Cesàro.
\newblock Sulle formole del volterra fondamentali nella teoria delle
  distorsioni elastiche.
\newblock {\em Il Nuovo Cimento Series 5}, 12(1):143 – 154, 1906.
\newblock Cited by: 11.

\bibitem{SKE2020}
S.~H. Christiansen, K.~Hu, and E.~Sande.
\newblock Poincaré path integrals for elasticity.
\newblock {\em Journal de Mathématiques Pures et Appliquées}, 135:83--102,
  2020.

\bibitem{CCG2007}
P.~G. Ciarlet, P.~Ciarlet, G.~Geymonat, and F.~Krasucki.
\newblock Characterization of the kernel of the operator {CURL\,CURL}.
\newblock {\em Comptes Rendus. Math\'ematique}, 344(5):305--308, 2007.

\bibitem{CGM2010}
P.~G. Ciarlet, L.~Gratie, and C.~Mardare.
\newblock A {C}esàro–{V}olterra formula with little regularity.
\newblock {\em Journal de Mathématiques Pures et Appliquées}, 93(1):41--60,
  2010.

\bibitem{dSai1855}
B.~de~{S}aint {V}enant.
\newblock {\em De la torsion des prismes}.
\newblock Imprimerie impériale, Paris, 1855.
\newblock Extrait du tome XIV des mémoires présentés par divers savants à
  l'académie des sciences.

\bibitem{DH1999}
M.~Dubois-Violette and M.~Henneaux.
\newblock Generalized cohomology for irreducible tensor fields of mixed {Y}oung
  symmetry type.
\newblock {\em Lett. Math. Phys.}, 49(3):245--252, 1999.

\bibitem{DH2002}
M.~Dubois-Violette and M.~Henneaux.
\newblock Tensor fields of mixed {Y}oung symmetry type and n-complexes.
\newblock {\em Comm. Math. Phys.}, 226(2):393--418, 2002.

\bibitem{Eas1999}
M.~Eastwood.
\newblock Variations on the de {R}ham complex.
\newblock {\em Notices AMS}, 46:1368--1376, 1999.

\bibitem{Eas2000}
M.~Eastwood.
\newblock A complex from linear elasticity.
\newblock In {\em Proceedings of the 19th Winter School" Geometry and
  Physics"}, pages 23--29. Circolo Matematico di Palermo, 2000.

\bibitem{Fal2008}
R.~S. Falk.
\newblock {\em Finite Element Methods for Linear Elasticity}, pages 159--194.
\newblock Springer Berlin Heidelberg, Berlin, Heidelberg, 2008.

\bibitem{Ful1997}
W.~Fulton.
\newblock {\em {Y}oung tableaux}, volume~35 of {\em London Mathematical Society
  Student Texts}.
\newblock Cambridge University Press, Cambridge, 1997.
\newblock With applications to representation theory and geometry.

\bibitem{GHL2004}
S.~Gallot, D.~Hulin, and J.~Lafontaine.
\newblock {\em Riemannian Geometry}.
\newblock Universitext. Springer Berlin Heidelberg, Berlin, third edition,
  2004.

\bibitem{Gar2022}
J.~Garrigues.
\newblock Algèbre et analyse tensorielles pour l’étude des milieux
  continus.
\newblock Centrale Marseille, Oct. 2022.
\newblock available at: https://cel.hal.science/cel-00679923.

\bibitem{GP2015}
D.~V. Georgievskii and B.~E. Pobedrya.
\newblock On the compatibility equations in terms of stresses in
  many-dimensional elastic medium.
\newblock {\em Russian Journal of Mathematical Physics}, 22(1):6--8, Jan. 2015.

\bibitem{GP2004}
D.~Georgiyevskii and B.~Pobedrya.
\newblock The number of independent compatibility equations in the mechanics of
  deformable solids.
\newblock {\em Journal of Applied Mathematics and Mechanics}, 68(6):941--946,
  2004.

\bibitem{Gur1963}
M.~E. Gurtin.
\newblock A generalization of the {B}eltrami stress functions in continuum
  mechanics.
\newblock {\em Archive for Rational Mechanics and Analysis}, 13(1):321--329,
  Dec. 1963.

\bibitem{Hu2024}
K.~Hu.
\newblock {\em Nonlinear Elasticity Complex and a Finite Element Diagram
  Chase}, pages 231--252.
\newblock Springer Nature Singapore, 2024.

\bibitem{Kroe1958}
E.~Kröner.
\newblock {\em Kontinuumstheorie der Versetzungen und Eigenspannungen}.
\newblock Springer, 1958.

\bibitem{Lan1999}
S.~Lang.
\newblock {\em Fundamentals of Differential Geometry}, volume 191 of {\em
  Graduate Texts in Mathematics}.
\newblock Springer-Verlag, New York, 1999.

\bibitem{LS1954}
H.~Langhaar and M.~Stippes.
\newblock Three-dimensional stress functions.
\newblock {\em Journal of the Franklin Institute}, 258(5):371--382, Nov. 1954.

\bibitem{Lov1944}
A.~E.~H. Love.
\newblock {\em A Treatise on the Mathematical Theory of Elasticity}.
\newblock Dover, New York, 4th edition, 1944.

\bibitem{Mus1977}
N.~I. Muskhelishvili.
\newblock {\em Some Basic Problems of the Mathematical Theory of Elasticity}.
\newblock Springer Netherlands, 1977.

\bibitem{CGM2007}
C.~M. P.G.~Ciarlet, L.~Gratie.
\newblock Intrinsic methods in elasticity: a mathematical survey.
\newblock {\em Discrete Contin. Dyn. Syst}, 2007.

\bibitem{CGM2009}
C.~M. Philippe G. Ciarlet~a, Liliana Gratie~a.
\newblock A generalization of the classical {C}esàro–{V}olterra path
  integral formula.
\newblock {\em C. R. Acad. Sci. Paris}, (1):347, 2009.

\bibitem{Pom2016}
J.-F. Pommaret.
\newblock Airy, {B}eltrami, {M}axwell, {E}instein and {L}anczos potentials
  revisited.
\newblock {\em Journal of Modern Physics}, 07(07):699--728, 2016.

\bibitem{Sad2009}
M.~H. Sadd.
\newblock {\em Elasticity}.
\newblock Elsevier, Academic Press, Amsterdam, 2. ed., [repr.] edition, 2009.

\bibitem{Sal2005}
J.~Salen{ç}on.
\newblock {\em Mécanique des milieux continus, Tome 1 - Concepts
  généraux}.
\newblock Editions de l{\textquoteright}Ecole polytechnique, 2005.

\bibitem{Sch1953}
H.~Schaefer.
\newblock The stress functions of the three-dimensional continuumand elastic
  bodies.
\newblock {\em ZAMM - Journal of Applied Mathematics and Mechanics /
  Zeitschrift f{ü}r Angewandte Mathematik und Mechanik},
  33(10–11):356--362, Jan. 1953.

\bibitem{Sou1992}
J.~M. Souriau.
\newblock Mécanique des états condensés de la matière.
\newblock Allocution pour le premier séminaire International de la
  fédération de mécanique de Grenoble, 19-21 mai 1992., 1992.

\bibitem{Spe1969}
D.~C. Spencer.
\newblock Overdetermined systems of linear partial differential equations.
\newblock {\em Bulletin of the American Mathematical Society}, 75(2):179--239,
  1969.

\bibitem{Teo1972}
P.~P. Teodorescu.
\newblock Stress functions in three-dimensional elastodynamics.
\newblock {\em Acta Mechanica}, 14(2–3):103--118, June 1972.

\bibitem{TGA1970}
S.~P. Timoshenko, J.~N. Goodier, and H.~N. Abramson.
\newblock Theory of elasticity (3rd ed.).
\newblock {\em Journal of Applied Mechanics}, 37(3):888--888, Sept. 1970.

\bibitem{Tin1996}
T.~T.~C. Ting.
\newblock {\em Anisotropic Elasticity: Theory and Applications}.
\newblock Oxford University Press, 04 1996.

\bibitem{Ton2003}
E.~Tonti.
\newblock A classification diagram for physical variables.
\newblock 2003.

\bibitem{Vol1906}
V.~{V}olterra.
\newblock Sur l'équilibre des corps élastiques multiplement connexes.
\newblock {\em Annales scientifiques de l'Ecole Normale Supérieure},
  24(3):401--517, 1907.

\bibitem{VV1960}
V.~{V}olterra and E.~{V}olterra.
\newblock {\em Sur les distorsions des corps élastiques (théorie et
  applications)}.
\newblock Gauthier-Villars, 1960.

\bibitem{Y2013}
A.~Yavari.
\newblock Compatibility equations of nonlinear elasticity for
  non-simply-connected bodies.
\newblock {\em Arch Rational Mech Anal}, 209(1):237–253, 2013.

\end{thebibliography}
\end{document}